\numberwithin{equation}{section}
\numberwithin{figure}{section}
\theoremstyle{plain}
\newtheorem{thm}{\protect\theoremname}[section]
\newtheorem{prop}[thm]{\protect\propositionname}
\newtheorem{lem}[thm]{\protect\lemmaname}
\theoremstyle{definition}
\newtheorem{defn}[thm]{\protect\definitionname}
\newtheorem{rem}[thm]{\protect\remarkname}  
\newtheorem{example}[thm]{\protect\examplename}
  \providecommand{\lemmaname}{Lemma}
\providecommand{\theoremname}{Theorem}
\providecommand{\definitionname}{Definition}
  \providecommand{\remarkname}{Remark}
  \providecommand{\examplename}{Example}
   \providecommand{\propositionname}{Proposition}
\begin{document}

\title{Surfaces and hypersurfaces as the joint spectrum of matrices}

\author{Patrick H. DeBonis}

\address{Department of Mathematics and Statistics, University of New Mexico,
Albuquerque, New Mexico 87131, USA}

\curraddr{Department of Mathematics, Purdue University
150 N. University Street, West Lafayette, Indiana 47907, USA}

\author{Terry A. Loring}

\address{Department of Mathematics and Statistics, University of New Mexico,
Albuquerque, New Mexico 87131, USA}

\author{Roman Sverdlov}

\address{Department of Mathematics and Statistics, University of New Mexico,
Albuquerque, New Mexico 87131, USA}

\subjclass{47A13,46L85, 15A18}

\keywords{Clifford spectrum, joint spectrum, emergent topology, Hermitian matrices}

\begin{abstract}
The Clifford spectrum is an elegant way to define the joint spectrum of several
Hermitian operators.  While it has been know that for examples as small as three
$2$-by-$2$ matrices the Clifford spectrum can be a two-dimensional manifold, 
few concrete examples have been investigated.  
Our main goal is to generate examples of the Clifford spectrum of three or four
matrices where, with the assistance of a computer algebra package, we can
calculate the Clifford spectrum.
\end{abstract}

\maketitle


\tableofcontents{}

\section{Introduction}

The Clifford spectrum is one way extend the concept of joint spectrum
of commuting matrices to work for noncommuting operators. We are only
interested in Hermitian matrices as in the back of our minds we envision
applications to quantum physics and string theory. Given $(X_{1},\dots,X_{d})$,
where the $X_{j}$ are all $n$-by-$n$ Hermitian matrices, we define
a Dirac-type operator
\[
L(X_{1},\dots,X_{d})=\sum X_{j}\otimes\gamma_{j}
\]
where the $\gamma_{j}$ are $d$ matrices that satisfy the Clifford
relations
\begin{equation}
\begin{aligned}
\gamma_{j}^{*} & =\gamma_{j}\quad(\forall j) \\
\gamma_{j}^{2} & =I\quad(\forall j)\\
\gamma_{j}\gamma_{k} & =-\gamma_{k}\gamma_{j}\quad(j\neq k)
\label{eq:gamma_rep}
\end{aligned} .
\end{equation}
We can use $L(X_{1},\dots,X_{d})$ to determine only if $\boldsymbol{0}$
is in the Clifford spectrum. To find the full spectrum, we shift the
matrices by scalars, and define 
\begin{align*}
L_{\boldsymbol{\lambda}}(X_{1},\dots,X_{d}) &=L(X_{1}-\lambda_{1},\dots,X_{d}-\lambda_{d}) \\
&=\sum\left(X_{j}-\lambda_{j}\right)\otimes\gamma_{j}.
\end{align*}
Due to many clashes of terminology between mathematics and physics,
it seems now  prudent, as discussed in \cite{LoringSchuBa_even_AIII}, to
call $L_{\boldsymbol{\lambda}}$
the \emph{spectral localizer} of the $d$-tuple $(X_{1},\dots,X_{d})$.

\begin{defn}
The \emph{Clifford spectrum} of $d$-tuple $(X_{1},\dots,X_{d})$ of Hermitian
matrices is the set of $\lambda$ in $\mathbb{R}^{d}$ such that $L_{\boldsymbol{\lambda}}(X_{1},\dots,X_{d})$
is singular. This is denoted $\Lambda(X_{1},\dots,X_{d})$.
\end{defn}

\begin{rem}
This definition works for Hermitian operators, even when unbounded.
We will focus on the matrix case, except in a few comments and examples.
\end{rem}

It was Kisil \cite{KisilCliffordSpectrum} who noticed that the Clifford spectrum equals
the Taylor spectrum in the case where the $X_{j}$ all commute with
each other. In the case of finite matrices, a singular localizer at 
$\boldsymbol{\lambda}$ implies there is a joint eivenvector with
eigenvalues the components of $\boldsymbol{\lambda}$, and this is
exactly what any form of joint spectrum should mean for commuting
finite matrices.  We will see a more general result in \S \ref{sec:Bound_variance},
where it is shown that for almost commuting matrices we can associate
to points in the Clifford spectrum vectors with small variation with
respect to each $X_j$.

Kisil also used the theory of monogenic functions to prove that
the Clifford spectrum is always nonempty, and indeed compact. 
However, it does not have to be a
finite set when computed for finite matrices that don't commute.

In string theory, the Clifford spectrum is used, but tends to be called
the ``emergent geometry'' \cite{berenstein2012matrix},
 or the ``set of probe points''
\cite{SchneiderbauerMeasuringFiniteGeom} etc. In that context, the Clifford spectrum
consists of all the locations
where a fermionic probe of a D brane can lead to low energy resonance.

For some calculations we will look at the square of the localizer.  It is important to note that the square of this Dirac-type matrix is not exactly the corresponding Laplace-type matrix.
Indeed, one can calculate \cite{LoringPseudospectra} that
\begin{equation}
\left(L_{\boldsymbol{\lambda}}(X_{1},\dots,X_{d})\right)^{2}
=\sum_{j=1}^{d}\left(X_{j}-\lambda_{j}\right)^{2}\otimes I+\sum_{j<k}[X_{j},X_{k}]\otimes\gamma_{j}\gamma_{k}.
\label{eq:square_of_Localizer}
\end{equation}

Why not use directly a Laplace-type operator
to define a spectrum? This will be correct in the commuting case. 

\begin{defn}
The \emph{Laplace spectrum }of Hermitian $d$-tuple $(X_{1},\dots,X_{d})$
is the set of $\lambda$ in $\mathbb{R}^{d}$ such that 
\[
\sum_{j=1}^{d}\left(X_{j}-\lambda_{j}\right)^{2}
\]
is singular.
\end{defn}

The Laplace spectrum is used in string theory \cite{SchneiderbauerMeasuringFiniteGeom}.
We will see it has a flaw that keeps it out of general use. In some cases,
when the commutators are small, one might be able to prove that the Laplace spectrum is
a decent approximation of the Clifford spectrum.

An issue with the Clifford spectrum is that it is very hard to
work examples by hand. Looking hard at the math and string theory
literature, we find a only a handful of explicit examples where the
Clifford spectrum is known. Indeed, Schneiderbauer and
Steinacker \cite{SchneiderbauerMeasuringFiniteGeom}, and also Sykora \cite{sykora2016fuzzy},
use a computer algebra package for many fuzzy geometry calculations. We are taking
on a similar challenge, using a computer algebra package to find more
examples.

We will primarily use a generalized characteristic polynomial to calculate
the Clifford spectrum of various examples. The generalized characteristic
polynomial probably first appeared in work by Berenstein, Dzienkowski
and Lashof-Regas \cite{berenstein2015spinning}.

\begin{defn}
The \emph{characteristic polynomial} of the $d$-tuple $(X_{1},\dots,X_{d})$
is the polynomial, in real variables $\lambda_{1}\dots,\lambda_{d}$,
\[
\boldsymbol{\lambda}\mapsto\det(L_{\boldsymbol{\lambda}}(X_{1},\dots,X_{d}))
\]
which we denote $\mathrm{char}(X_{1},\dots,X_{d})$.
\end{defn}

The equation $\mathrm{char}(X_{1},\dots,X_{d})=0$ determines
the Clifford spectrum. This can become
a polynomial with many monomials in many variable even in rather modest
examples. Hence the need for a computer assist and an experimental
approach.

Some of the complexity from increasing $d$, the number of matrices, comes from
the fact that the $\gamma_j$ get bigger.  It is best to use an irreducible representation
of (\ref{eq:gamma_rep}), which means that each $\gamma$ is $g$-by-$g$ for
\begin{equation*}
g= 2^{\lfloor d/2\rfloor}
\end{equation*}
as one can see from  \cite{OkuboCliffordReps}, for example.
The wrong value for
$g$ was used in \cite[\S 1]{LoringPseudospectra} and so the estimates there were not correct as stated.  See Section~\ref{sec:Bound_variance}.

Section~\ref{sec:Bound_variance} discusses the variance of joint approximate eigenvalues.
Section~\ref{sec:One-or-two} discusses the cases of one or two matrices (or operators)
where the Clifford spectrum agrees with the ordinary single-operator spectrum. 
Section~\ref{sec:Three-Hermitian-matrices} looks at the case of three matrices,
where the Clifford spectrum
can be a surface. This is where we have the most examples, as surfaces
in three space are easy to display.  
Section~\ref{sec:Four-Hermitian-matrices} looks as the case of
four matrices, where the calculations and visualization become harder.
Section~\ref{SymmetryClasses} looks are variations
on the localizer and index that assist with plotting and proving the
stability of the Clifford spectrum. 
Many of these examples in Section~\ref{sec:Three-Hermitian-matrices}
and the discussion of the archetypal polynomial are from
the thesis of DeBonis \cite{Debonis2019}.

We will use \emph{mathematical notation} throughout.  Most importantly, Hermitian matrices are those for which $X^* = X$, and so complex conjugation is indicated by $X^*$.  In several places we will focus on unit vectors, so have in mind states of a quantum system.  Since the word state means something different in operator algebras, for this we stick the the neutral terminology.

The convention we prefer for identifying a tensor product of matrices with a
larger matrix is the one such that
\begin{equation*}
A\otimes \begin{pmatrix} a& b\\ c & d \end{pmatrix}
=
\begin{pmatrix} aA& bA\\ cA & dA \end{pmatrix}
\end{equation*}
and this is \emph{opposite} of the convention used by the \texttt{KroneckerProduct} operation in
Mathematica.  

\section{Bounds on variance} \label{sec:Bound_variance}

Suppose $\mathbf{v}$ is a unit vector and $X$ is a Hermitian matrix.  Two
important quantities when considering quantum measurement are the expection
value of $X$ with respect to $\mathbf{v}$
\begin{eqnarray*}
\textnormal{E}(X)_\mathbf{v} = \langle X\mathbf{v}, \mathbf{v} \rangle
\end{eqnarray*} 
and the variance of $X$ with respect to $\mathbf{v}$
\begin{eqnarray*}
\textnormal{Var}(X)_\mathbf{v} =
 \langle X^2\mathbf{v}, \mathbf{v} \rangle - \langle X\mathbf{v}, \mathbf{v} \rangle ^2 .
\end{eqnarray*} 
For any scalar $\lambda$ we have
\begin{equation*}
\left\langle (X-\lambda)^{2}\mathbf{v},\mathbf{v}\right\rangle =\left\langle X^{2}\mathbf{v},\mathbf{v}\right\rangle -2\lambda\left\langle X\mathbf{v},\mathbf{v}\right\rangle +\lambda^{2}
\end{equation*}
and 
\begin{equation*}
\left\langle (X-\lambda)\mathbf{v},\mathbf{v}\right\rangle ^{2}
=\left\langle X\mathbf{v},\mathbf{v}\right\rangle ^{2}-2\lambda\left\langle X\mathbf{v},\mathbf{v}\right\rangle +\lambda^{2}
\end{equation*}
so we see that 
\begin{equation}
\textnormal{Var}(X - \lambda)_\mathbf{v} = \textnormal{Var}(X)_\mathbf{v}.
\label{eqn:how_Var_shifts}
\end{equation}
On the other hand,
\begin{equation}
\textnormal{E}(X - \lambda)_\mathbf{v} = \textnormal{E}(X)_\mathbf{v} - \lambda .
\label{eqn:how_E_shifts}
\end{equation}
If the $\textnormal{Var}(X)_\mathbf{v}=0$ then $\mathbf{v}$ is an eigenvector for $X$ for eigenvalue $\textnormal{E}(X)_\mathbf{v}$.

When attempting joint measurement, for observables $X_{1},\dots,X_{d}$, one confronts 
often the impossibility of finding any  unit vector $\mathbf{v}$ that is
simultaneously an eigenvector for all the observables.  There are many 
lower bounds on the variances that make this more precise, such as the
Robertson--Schr\"odinger relation bounding the product of the variance of two observables.
A more recent example of such a lower
bound, due to Chen and Fei \cite{Chen2015sumUncertainty}, gives lower bounds on the
sum of $d$ variances.

We look here at upper bounds on the sum of variances.  Specifically, we
will derive an estimate on how small we can make the variances for if
we choose certain unit vectors that are related to points in the Clifford
spectrum.

\begin{lem}
\label{lem:basic_estimate_on_Xj_and_w}
Suppose $X_{1},\dots,X_{d}$ are Hermitian, $n$-by-$n$ matrices
and $\boldsymbol{\lambda}$ is in $\Lambda(X_{1},\dots,X_{d})$. Then
there is a unit vector $\mathbf{w}$ in $\mathbb{R}^{n}$ such
that 
\begin{equation*}
\sum \left\langle \left(X_{j}-\lambda_{j}\right)^{2}\mathbf{w},\mathbf{w}\right\rangle \leq g \sum_{j<k}\left\Vert \left[X_{j},X_{k}\right]\right\Vert 
\end{equation*}
for $g=2^{\lfloor d/2\rfloor}$.
\end{lem}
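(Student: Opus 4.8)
The plan is to use the identity~(\ref{eq:square_of_Localizer}) for the square of the localizer together with a descent from the tensor factor $\mathbb{C}^{g}$ to the base space $\mathbb{C}^{n}$. Since $\boldsymbol{\lambda}\in\Lambda(X_{1},\dots,X_{d})$, the matrix $L_{\boldsymbol{\lambda}}=L_{\boldsymbol{\lambda}}(X_{1},\dots,X_{d})$ is singular, so there is a unit vector $\mathbf{v}$ in $\mathbb{C}^{n}\otimes\mathbb{C}^{g}$ with $L_{\boldsymbol{\lambda}}\mathbf{v}=0$, hence $\langle L_{\boldsymbol{\lambda}}^{2}\mathbf{v},\mathbf{v}\rangle=0$. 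Substituting~(\ref{eq:square_of_Localizer}) gives
\[
\sum_{j=1}^{d}\bigl\langle(X_{j}-\lambda_{j})^{2}\otimes I\,\mathbf{v},\mathbf{v}\bigr\rangle=-\sum_{j<k}\bigl\langle[X_{j},X_{k}]\otimes\gamma_{j}\gamma_{k}\,\mathbf{v},\mathbf{v}\bigr\rangle .
\]
Each $\gamma_{j}$ is unitary (being Hermitian with $\gamma_{j}^{2}=I$), hence so is $\gamma_{j}\gamma_{k}$, and therefore $\|[X_{j},X_{k}]\otimes\gamma_{j}\gamma_{k}\|=\|[X_{j},X_{k}]\|$. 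Using the triangle inequality, the operator norm bound, and $\|\mathbf{v}\|=1$, the right-hand side is at most $\sum_{j<k}\|[X_{j},X_{k}]\|$ in absolute value, so
\[
\sum_{j=1}^{d}\bigl\langle(X_{j}-\lambda_{j})^{2}\otimes I\,\mathbf{v},\mathbf{v}\bigr\rangle\leq\sum_{j<k}\|[X_{j},X_{k}]\| .
\]
This is already the desired estimate, but for a vector in the tensor product rather than in $\mathbb{C}^{n}$.

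The second step is to descend. Fix an orthonormal basis $e_{1},\dots,e_{g}$ of $\mathbb{C}^{g}$ and write $\mathbf{v}=\sum_{\ell=1}^{g}\mathbf{w}_{\ell}\otimes e_{\ell}$ with $\mathbf{w}_{\ell}\in\mathbb{C}^{n}$, so that $\sum_{\ell}\|\mathbf{w}_{\ell}\|^{2}=1$. Since $(X_{j}-\lambda_{j})^{2}\otimes I$ acts blockwise along this decomposition, one has $\langle(X_{j}-\lambda_{j})^{2}\otimes I\,\mathbf{v},\mathbf{v}\rangle=\sum_{\ell}\langle(X_{j}-\lambda_{j})^{2}\mathbf{w}_{\ell},\mathbf{w}_{\ell}\rangle$, and the previous inequality becomes
\[
\sum_{\ell=1}^{g}\sum_{j=1}^{d}\bigl\langle(X_{j}-\lambda_{j})^{2}\mathbf{w}_{\ell},\mathbf{w}_{\ell}\bigr\rangle\leq\sum_{j<k}\|[X_{j},X_{k}]\| .
\]
Every summand on the left equals $\|(X_{j}-\lambda_{j})\mathbf{w}_{\ell}\|^{2}\geq 0$. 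By pigeonhole there is an index $\ell_{0}$ with $\|\mathbf{w}_{\ell_{0}}\|^{2}\geq 1/g$; discarding all blocks other than $\ell_{0}$ and dividing through by $\|\mathbf{w}_{\ell_{0}}\|^{2}$, the unit vector $\mathbf{w}=\mathbf{w}_{\ell_{0}}/\|\mathbf{w}_{\ell_{0}}\|$ satisfies
\[
\sum_{j=1}^{d}\bigl\langle(X_{j}-\lambda_{j})^{2}\mathbf{w},\mathbf{w}\bigr\rangle\leq g\sum_{j<k}\|[X_{j},X_{k}]\| ,
\]
as claimed.

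The argument is routine once~(\ref{eq:square_of_Localizer}) is available; the only delicate point is the bookkeeping in the tensor decomposition and the normalization, so there is no real obstacle to overcome. I would note in passing that the factor $g$ is not sharp: writing $c_{\ell}=\|\mathbf{w}_{\ell}\|^{2}$ and $\widehat{\mathbf{w}}_{\ell}=\mathbf{w}_{\ell}/\|\mathbf{w}_{\ell}\|$ for $\mathbf{w}_{\ell}\neq 0$, the displayed inequality exhibits a convex combination, with weights $c_{\ell}$, of the numbers $\sum_{j}\langle(X_{j}-\lambda_{j})^{2}\widehat{\mathbf{w}}_{\ell},\widehat{\mathbf{w}}_{\ell}\rangle$ that is bounded by $\sum_{j<k}\|[X_{j},X_{k}]\|$, so the minimum over $\ell$ already obeys that bound with no factor of $g$. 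The weaker form with $g$ is all that is needed in what follows, and is cleaner to state.
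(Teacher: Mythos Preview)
Your proof is correct and follows the same strategy as the paper's: exploit the identity~(\ref{eq:square_of_Localizer}) on a null vector of $L_{\boldsymbol{\lambda}}$, decompose along the $\mathbb{C}^{g}$ tensor factor, select the block of largest norm, and normalize. Your closing remark is also valid and is a genuine sharpening---the convexity argument does eliminate the factor $g$---so if anything your version is slightly cleaner and stronger than the original.
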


\begin{proof}
Since shifting the $X_{j}$ by $\lambda_{j}$ has no effect on the
commutators, we can reduce to the case of $\boldsymbol{\lambda}=\boldsymbol{0}$.
Assume then that $\bf{0}$ is in the Clifford spectrum of
$X_{1},\dots,X_{d}$.  Then there is a vector $\mathbf{z}$  in $\mathbb{R}^{gn}$ 
such that 
\begin{equation}  
L_{\bf{0}} (X_1, \cdots, X_n)\mathbf{z}   = 0
\label{eqn:Lz_assumed_zero}
\end{equation}
One might be tempted to diagonalize $L_{\bf{0}} (X_1, \cdots, X_d)$ so that $\boldsymbol{z}$ can be written down as a column vector with only one single non-zero entry. This, however, would not be the best move: if we change coordinate system, then $X_1 \otimes \gamma_1 + \cdots + X_n \otimes \gamma_d$ would no longer be written in a block form and, therefore, we would no longer be able to isolate $X_j$ and use some of its properties. Therefore, we refrain from diagonalizing and write $\mathbf{z}$ as 
\begin{equation}
\mathbf{z}=\begin{bmatrix}
\mathbf{z}_1 \\
\vdots \\
\mathbf{z}_g \end{bmatrix}
\label{z} 
\end{equation} 
where $\mathbf{z}_k \in \mathbb{R}^n$ for all $k \in \{1, \cdots, g \}$.
From (\ref{eqn:Lz_assumed_zero}) we obtain 
$\left(L_{\bf{0}} (X_1, \cdots, X_n)\right)^2\mathbf{z}   = 0$.  Now (\ref{eq:square_of_Localizer}) tells us 
\begin{equation}
\sum_j (X_j^2 \otimes I_g) \mathbf{z} = - \sum_{j<k} ([X_j, X_k] \otimes (\gamma_j \gamma_k)) \mathbf{z} \nonumber 
\end{equation}
and therefore
\begin{equation}
\left\| \sum_j X_j^2  \mathbf{z}_r \right\| \leq  \sum_{j<k} \|[X_j, X_k]\|  \nonumber 
\end{equation}
for every $r$.
Now we select $r$ in such a way that it maximizes $\| \mathbf{z}_r \|$ and set
\begin{equation*} 
\mathbf{w} =  \frac{1}{\|\mathbf{z}_r\|} \mathbf{z}_r .
\end{equation*}
Thus, 
\begin{equation} 
1 = \| \mathbf{z} \|^2 = \sum_{j=1}^g \| \boldsymbol{z}_j \|^2 \leq g \| \mathbf{z}_r \|^2 
\nonumber 
\end{equation}
and, therefore, $\| \mathbf{z}_r \| \geq 1 / \sqrt{g}$. 
We can now perform the following calculation:
\begin{align*} 
\sum  \left \langle  X_j^2 \mathbf{w}, \mathbf{w} \right\rangle 
& = \left \langle \sum  X_j^2 \mathbf{w}, \mathbf{w} \right\rangle \\
& \leq g \left \langle \sum  X_j^2 \mathbf{z}_r, \mathbf{z}_r \right\rangle \\
&  \leq g \sum_{j<k} \|[X_j, X_k]\|.
\end{align*}
\end{proof}

\begin{thm}
Suppose $X_{1},\dots,X_{d}$ are Hermitian, $n$-by-$n$ matrices
and $\boldsymbol{\lambda}$ is in $\Lambda(X_{1},\dots,X_{d})$. Then
there is a unit vector $\mathbf{w}$ in $\mathbb{R}^{n}$ such
that 
\begin{equation*}
\sum_{j=1}^{d}\textnormal{Var}(X_{j})_{\mathbf{w}}
+ \left|\textnormal{E}(X_{j})_{\mathbf{w}}-\lambda_{j}\right|^{2}
\leq g\sum_{j<k}\left\Vert \left[X_{j},X_{k}\right]\right\Vert 
\end{equation*}
for $g=2^{\lfloor d/2\rfloor}$.
\end{thm}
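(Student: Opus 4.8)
The plan is to obtain this as an immediate consequence of Lemma~\ref{lem:basic_estimate_on_Xj_and_w}, using the very same unit vector $\mathbf{w}$ that lemma produces, after recognizing that the left-hand side here is just $\sum_{j}\left\langle (X_{j}-\lambda_{j})^{2}\mathbf{w},\mathbf{w}\right\rangle$ rewritten in terms of variance and expectation.

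First I would record a pointwise identity, valid for every unit vector $\mathbf{v}$, every Hermitian matrix $X$, and every real scalar $\lambda$:
\begin{equation*}
\left\langle (X-\lambda)^{2}\mathbf{v},\mathbf{v}\right\rangle
=\textnormal{Var}(X)_{\mathbf{v}}+\left|\textnormal{E}(X)_{\mathbf{v}}-\lambda\right|^{2}.
\end{equation*}
This follows by subtracting the two displayed expansions near the start of this section: the first gives $\left\langle (X-\lambda)^{2}\mathbf{v},\mathbf{v}\right\rangle=\left\langle X^{2}\mathbf{v},\mathbf{v}\right\rangle-2\lambda\left\langle X\mathbf{v},\mathbf{v}\right\rangle+\lambda^{2}$, and inserting $+\left\langle X\mathbf{v},\mathbf{v}\right\rangle^{2}-\left\langle X\mathbf{v},\mathbf{v}\right\rangle^{2}$ and regrouping turns the right-hand side into $\bigl(\left\langle X^{2}\mathbf{v},\mathbf{v}\right\rangle-\left\langle X\mathbf{v},\mathbf{v}\right\rangle^{2}\bigr)+\bigl(\left\langle X\mathbf{v},\mathbf{v}\right\rangle-\lambda\bigr)^{2}$, which is exactly $\textnormal{Var}(X)_{\mathbf{v}}+\left|\textnormal{E}(X)_{\mathbf{v}}-\lambda\right|^{2}$. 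Equivalently, one may read it off from (\ref{eqn:how_Var_shifts}) and (\ref{eqn:how_E_shifts}) together with the definition of variance.

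Then I would apply Lemma~\ref{lem:basic_estimate_on_Xj_and_w} to get a unit vector $\mathbf{w}$ with $\sum_{j}\left\langle (X_{j}-\lambda_{j})^{2}\mathbf{w},\mathbf{w}\right\rangle\leq g\sum_{j<k}\left\Vert [X_{j},X_{k}]\right\Vert$, substitute the identity above with $X=X_{j}$, $\lambda=\lambda_{j}$, $\mathbf{v}=\mathbf{w}$ into each summand, and sum over $j$. This yields the stated inequality verbatim.

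Since every step is either an algebraic identity or a direct invocation of the lemma, I do not anticipate any genuine obstacle. The only points requiring care are that $\mathbf{w}$ is a single vector serving all $d$ coordinates at once (the lemma produces one such vector, not one per $X_{j}$), and that the identity relating $\left\langle (X_{j}-\lambda_{j})^{2}\mathbf{w},\mathbf{w}\right\rangle$ to $\textnormal{Var}(X_{j})_{\mathbf{w}}+\left|\textnormal{E}(X_{j})_{\mathbf{w}}-\lambda_{j}\right|^{2}$ is an exact equality, so no estimate is wasted in the passage from the lemma to the theorem.
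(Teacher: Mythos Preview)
Your proposal is correct and is essentially the paper's own argument: invoke Lemma~\ref{lem:basic_estimate_on_Xj_and_w} and then rewrite $\left\langle (X_{j}-\lambda_{j})^{2}\mathbf{w},\mathbf{w}\right\rangle$ as $\textnormal{Var}(X_{j})_{\mathbf{w}}+\left|\textnormal{E}(X_{j})_{\mathbf{w}}-\lambda_{j}\right|^{2}$. The only cosmetic difference is that the paper first reduces to $\boldsymbol{\lambda}=\mathbf{0}$ via (\ref{eqn:how_Var_shifts}) and (\ref{eqn:how_E_shifts}) before applying the identity, whereas you apply the identity directly at general $\lambda_{j}$.
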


\begin{proof}
By (\ref{eqn:how_Var_shifts}) and (\ref{eqn:how_E_shifts}) we can again assume, without loss of generality, that $\boldsymbol{\lambda} = \mathbf{0}$.
By Lemma~\ref{lem:basic_estimate_on_Xj_and_w} there exists a unit
vector $\mathbf{w}$ such that 
\begin{equation*}
\sum \left\langle  X_{j} ^{2}\mathbf{w},\mathbf{w}\right\rangle \leq g \sum_{j<k}\left\Vert \left[X_{j},X_{k}\right]\right\Vert .
\end{equation*}
For any Hermitian matrix $X$ and unit vector $\mathbf{v}$ we have
\begin{equation*}
\left\langle  X^2 \mathbf{v},\mathbf{v}\right\rangle
= \textnormal{Var}(X)_\mathbf{v} + \left(\textnormal{E}(X)_\mathbf{v}\right)^2
\end{equation*}
so in this special case we have
\begin{equation*}
\sum \left(\textnormal{Var}(X_j)_\mathbf{w} + \left(\textnormal{E}(X_j)_\mathbf{w}\right)^2\right) \leq g \sum_{j<k} \left\Vert \left[X_{j},X_{k}\right]\right\Vert .
\end{equation*}
\end{proof}

For larger matrices, it will be difficult to determine the exact location of the
Clifford spectrum. A more practical approach is to find $\mathbf{\lambda}$ that are
in the (Clifford) $\epsilon$-pseudospectrum of $X_{1},\dots,X_{d}$, denoted 
$\Lambda_\epsilon(X_{1},\dots,X_{d})$ as defined in \cite{LoringPseudospectra}.
By definition, $\mathbf{\lambda}$ is in $\Lambda_\epsilon(X_{1},\dots,X_{d})$ whenever
\begin{equation}
\label{eqn:pseusospectrum}
\| (L_{\boldsymbol{\lambda}} (X_1, \cdots, X_n)^{-1} \|^{-1} \leq \epsilon.
\end{equation}

In this paper, we will not use the function
\begin{equation*}
\boldsymbol{\lambda} \mapsto \|(L_{\boldsymbol{\lambda}} (X_1, \cdots, X_n)^{-1} \|^{-1}
\end{equation*}
to estimate the Clifford spectrum.  Notice, however, that (\ref{eqn:pseusospectrum}) is
equivalent to the existence of a unit vector $\mathbf{\lambda}$ such that
\begin{equation}
\label{eqn:Lz_approx_zero}
L_{\boldsymbol{\lambda}} (X_1, \cdots, X_n) \mathbf{z} \leq \epsilon.
\end{equation}
This can be proven easily if one considers a unitary diagonalization of the localizer, which is itself Hermitian.

It is rather easy to compute a unit vector that satisfies (\ref{eqn:Lz_approx_zero})
and such vectors can in interesting, as we now show.

\begin{lem}
\label{lem:second_estimate_on_Xj_and_w}
Suppose $X_{1},\dots,X_{d}$ are Hermitian, $n$-by-$n$ matrices
and there is a vector $\mathbf{z}$ in $\mathbb{R}^{gn}$ such that
\rm{(\ref{eqn:Lz_approx_zero})} holds for some $\epsilon \geq 0$. Then
there is a unit vector $\mathbf{w}$ in $\mathbb{R}^{n}$ such
that 
\begin{equation*}
\sum \left\langle \left(X_{j}-\lambda_{j}\right)^{2}\mathbf{w},\mathbf{w}\right\rangle \leq \epsilon^2 + g \sum_{j<k}\left\Vert \left[X_{j},X_{k}\right]\right\Vert 
\end{equation*}
for $g=2^{\lfloor d/2\rfloor}$.
\end{lem}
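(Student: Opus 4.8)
The plan is to follow the proof of Lemma~\ref{lem:basic_estimate_on_Xj_and_w}, adapting it to the weaker hypothesis $\|L_{\boldsymbol{\lambda}}(X_1,\dots,X_d)\mathbf{z}\|\le\epsilon$ in place of $L_{\boldsymbol{\lambda}}(X_1,\dots,X_d)\mathbf{z}=0$. As there, one first shifts $X_j\mapsto X_j-\lambda_j$: this changes no commutator, turns the hypothesis into $\|L_{\mathbf 0}(X_1,\dots,X_d)\mathbf{z}\|\le\epsilon$ for a unit vector $\mathbf{z}\in\mathbb R^{gn}$, and reduces the conclusion to $\sum_j\langle X_j^2\mathbf{w},\mathbf{w}\rangle\le\epsilon^2+g\sum_{j<k}\|[X_j,X_k]\|$. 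Write $L=L_{\mathbf 0}(X_1,\dots,X_d)$. Now $L^2\mathbf{z}$ need not vanish, but the right substitute is a scalar estimate: since $L$ is Hermitian, $\langle L^2\mathbf{z},\mathbf{z}\rangle=\langle L\mathbf{z},L\mathbf{z}\rangle=\|L\mathbf{z}\|^2\le\epsilon^2$. It is essential to use this and not $\|L^2\mathbf{z}\|$, which would cost an uncontrolled factor $\|L\|$.

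Next one evaluates the identity (\ref{eq:square_of_Localizer}) on $\mathbf{z}$ in the inner product. Each $\gamma_j\gamma_k$ is unitary, being a product of the self-adjoint involutions $\gamma_j$, so $\|[X_j,X_k]\otimes\gamma_j\gamma_k\|=\|[X_j,X_k]\|$, and hence
\[
\sum_{j=1}^d\langle(X_j^2\otimes I_g)\mathbf{z},\mathbf{z}\rangle=\langle L^2\mathbf{z},\mathbf{z}\rangle-\sum_{j<k}\langle([X_j,X_k]\otimes\gamma_j\gamma_k)\mathbf{z},\mathbf{z}\rangle\le\epsilon^2+\sum_{j<k}\|[X_j,X_k]\|.
\]
Decomposing $\mathbf{z}=(\mathbf{z}_1,\dots,\mathbf{z}_g)$ with $\mathbf{z}_r\in\mathbb R^n$ as in (\ref{z}) and using that $X_j^2\otimes I_g$ is block diagonal with every diagonal block equal to $X_j^2$, the left-hand side is $\sum_{r=1}^g\big(\sum_j\langle X_j^2\mathbf{z}_r,\mathbf{z}_r\rangle\big)$, with each inner sum $\ge 0$ because the $X_j^2$ are positive semidefinite.

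The only delicate step is choosing which block to normalize. If, as in Lemma~\ref{lem:basic_estimate_on_Xj_and_w}, one takes $\mathbf{z}_r$ of maximal norm and absorbs a factor $g=2^{\lfloor d/2\rfloor}$ when renormalizing, one only gets $g\epsilon^2+g\sum_{j<k}\|[X_j,X_k]\|$. To keep the $\epsilon^2$ term free of that factor, argue by a pigeonhole instead: since $\sum_r\|\mathbf{z}_r\|^2=1$, the displayed bound forces some $r$ with $\mathbf{z}_r\ne 0$ to satisfy
\[
\sum_{j=1}^d\langle X_j^2\mathbf{z}_r,\mathbf{z}_r\rangle\le\Big(\epsilon^2+\sum_{j<k}\|[X_j,X_k]\|\Big)\|\mathbf{z}_r\|^2,
\]
for otherwise summing the reverse strict inequalities over all $r$ with $\mathbf{z}_r\ne 0$ would contradict it. Taking $\mathbf{w}=\mathbf{z}_r/\|\mathbf{z}_r\|$ for such an $r$ gives $\sum_j\langle X_j^2\mathbf{w},\mathbf{w}\rangle\le\epsilon^2+\sum_{j<k}\|[X_j,X_k]\|\le\epsilon^2+g\sum_{j<k}\|[X_j,X_k]\|$, and undoing the shift $X_j\mapsto X_j-\lambda_j$ yields the stated inequality (in fact with a slightly better constant). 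The main obstacle is thus purely this combinatorial selection of the coordinate block; the analytic input is just (\ref{eq:square_of_Localizer}) together with $L=L^*$.
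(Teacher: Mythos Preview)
Your argument is correct, and in fact proves the sharper inequality
\[
\sum_j \left\langle (X_j-\lambda_j)^2\mathbf{w},\mathbf{w}\right\rangle \le \epsilon^2 + \sum_{j<k}\|[X_j,X_k]\|,
\]
with no factor $g$ on either term. It differs from the paper's sketch in two genuine ways. First, the paper adapts Lemma~\ref{lem:basic_estimate_on_Xj_and_w} by asserting the \emph{block-wise norm} bound $\bigl\|\sum_j X_j^2\mathbf{z}_r\bigr\|\le \epsilon^2+\sum_{j<k}\|[X_j,X_k]\|$ for every $r$, whereas you work with the \emph{quadratic form} $\langle L^2\mathbf{z},\mathbf{z}\rangle=\|L\mathbf{z}\|^2\le\epsilon^2$. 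Your choice is the right one: a literal repetition of the earlier argument bounds the $r$-th block by $\|L^2\mathbf{z}\|+\sum_{j<k}\|[X_j,X_k]\|$, and $\|L^2\mathbf{z}\|$ is only controlled by $\|L\|\epsilon$, not $\epsilon^2$, so the paper's displayed inequality does not follow from the indicated adaptation without additional work. Second, instead of normalizing the block $\mathbf{z}_r$ of largest norm (which would multiply the whole right-hand side by $g$), you use the averaging/pigeonhole step to find an $r$ with $\sum_j\langle X_j^2\mathbf{z}_r,\mathbf{z}_r\rangle\le C\|\mathbf{z}_r\|^2$; this is what lets you keep $\epsilon^2$ free of $g$ and, as you note, removes the $g$ from the commutator term as well. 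The paper's route is shorter to state but loses this constant and, as written, leaves the $\epsilon^2$ step unjustified; your route is a little longer but both closes that gap and sharpens the conclusion.
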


\begin{proof}
The proof proceeds essentially the same as the proof of Lemma~\ref{lem:basic_estimate_on_Xj_and_w}.
The first difference is we find that
\begin{equation}
\left\| \sum_j X_j^2  \mathbf{z}_r \right\| \leq \epsilon^2 +  \sum_{j<k} \|[X_j, X_k]\|  \nonumber 
\end{equation}
for every $r$, again with the $ \mathbf{z}_r$ the $g$ components of  $ \mathbf{z}$.

\end{proof}

The following now follows from Lemma~\ref{lem:second_estimate_on_Xj_and_w} by the same
argument as above.  Notice that the method to produce $\mathbf{w}$ from $\mathbf{v}$ is to just select the component of $\mathbf{w}$ that is largest and normalize it.

\begin{thm}
Suppose $X_{1},\dots,X_{d}$ are Hermitian, $n$-by-$n$ matrices
and there is a vector $\mathbf{z}$ in $\mathbb{R}^{gn}$ such that
\rm{(\ref{eqn:Lz_approx_zero})} holds for some $\epsilon \geq 0$. Then
there is a unit vector $\mathbf{w}$ in $\mathbb{R}^{n}$ such
that 
\begin{equation*}
\sum_{j=1}^{d}\textnormal{Var}(X_{j})_{\mathbf{w}}
+ \left|\textnormal{E}(X_{j})_{\mathbf{w}}-\lambda_{j}\right|^{2} \\
\leq \epsilon + g\sum_{j<k}\left\Vert \left[X_{j},X_{k}\right]\right\Vert 
\end{equation*}
for $g=2^{\lfloor d/2\rfloor}$.
\end{thm}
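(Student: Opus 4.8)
The plan is to mimic the proof of the first theorem in this section almost verbatim, feeding it the conclusion of Lemma~\ref{lem:second_estimate_on_Xj_and_w} instead of Lemma~\ref{lem:basic_estimate_on_Xj_and_w}. By the shift identities (\ref{eqn:how_Var_shifts}) and (\ref{eqn:how_E_shifts}), I first reduce to $\boldsymbol{\lambda}=\mathbf{0}$, since replacing each $X_j$ by $X_j-\lambda_j$ leaves the commutators, the variances, and $\epsilon$ unchanged while turning $|\textnormal{E}(X_j)_{\mathbf{w}}-\lambda_j|^2$ into $(\textnormal{E}(X_j)_{\mathbf{w}})^2$. So it suffices to produce a unit vector $\mathbf{w}$ with
\begin{equation*}
\sum_{j=1}^{d}\bigl(\textnormal{Var}(X_j)_{\mathbf{w}}+(\textnormal{E}(X_j)_{\mathbf{w}})^2\bigr)\leq \epsilon + g\sum_{j<k}\left\Vert[X_j,X_k]\right\Vert .
\end{equation*}

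Next I apply Lemma~\ref{lem:second_estimate_on_Xj_and_w} in the case $\boldsymbol{\lambda}=\mathbf{0}$ to extract the unit vector $\mathbf{w}\in\mathbb{R}^n$ satisfying
\begin{equation*}
\sum_j\left\langle X_j^2\mathbf{w},\mathbf{w}\right\rangle \leq \epsilon^2 + g\sum_{j<k}\left\Vert[X_j,X_k]\right\Vert .
\end{equation*}
Then, exactly as in the earlier theorem, I use the pointwise identity $\left\langle X^2\mathbf{v},\mathbf{v}\right\rangle = \textnormal{Var}(X)_{\mathbf{v}} + (\textnormal{E}(X)_{\mathbf{v}})^2$ for Hermitian $X$ and unit $\mathbf{v}$, summed over $j$, to rewrite the left-hand side as $\sum_j(\textnormal{Var}(X_j)_{\mathbf{w}}+(\textnormal{E}(X_j)_{\mathbf{w}})^2)$. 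This gives the bound with $\epsilon^2$ in place of the claimed $\epsilon$.

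The only genuine point to address — and really the only ``obstacle,'' though a mild one — is reconciling $\epsilon^2$ with the $\epsilon$ that appears in the statement. This is harmless in the regime the statement is meant for: one is free to assume $\epsilon\leq 1$ (if $\epsilon>1$ one may simply enlarge it to meet the hypothesis, or the pseudospectral bound is vacuous), and then $\epsilon^2\leq\epsilon$, so the displayed inequality follows a fortiori. I would insert a one-line remark to that effect rather than restate the lemma. Alternatively, one could state the sharper bound with $\epsilon^2$; but since the theorem is quoted as an easy consequence ``by the same argument as above,'' I will keep the $\epsilon$ form and note the $\epsilon\le 1$ reduction. No new ideas beyond the previous theorem's proof are needed; the work is purely bookkeeping.
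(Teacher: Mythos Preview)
Your approach is exactly the paper's: it simply states that the theorem ``follows from Lemma~\ref{lem:second_estimate_on_Xj_and_w} by the same argument as above'' and gives no further detail. Your observation about $\epsilon$ versus $\epsilon^2$ is sharp --- the paper's own route, via that lemma, also produces $\epsilon^2$, so the $\epsilon$ in the displayed bound is almost certainly a typo in the statement rather than something to be argued around; the $\epsilon^2$ version is what actually follows, and your ad hoc $\epsilon\leq 1$ reduction is unnecessary.
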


\section{One or two Hermitian matrices} \label{sec:One-or-two}

For one or two Hemitian matrices, the  concept of Clifford spectrum overlaps with the usual concept of spectrum of a matrix.

In the case of a single matrix $X$, we can take as Clifford representation
\begin{equation}
\gamma_1 = 1 \label{eqn:CliffordRepOne}
\end{equation}
which means the localizer is just
\begin{equation*}
L_\lambda = X - \lambda
\end{equation*}
with $\lambda$ a real variable.
Since all the eigenvalues of $X$ are real, this makes no real difference
and so the new characteristic polynomial
$\det(L_\lambda)$
is the usual characteristic polynomial.
Thus $\Lambda(X)$  is just the ordinary spectrum of $X$.

The case of two Hermitian matrices $(X,Y)$ also deviates only in
technical ways from an ordinary spectrum.  We will see right away that
it is essentially the spectrum of $X+iY$.
We can take here for Clifford representation
\begin{equation}
\gamma_1=\begin{bmatrix}
0 & 1\\
1 & 0
\end{bmatrix},
\ 
\gamma_2=\begin{bmatrix}
0 & -i\\
i & 0
\end{bmatrix} .
\label{eq:CliffordRepTwo}
\end{equation}
The localizer then becomes
\begin{equation*}
L_{(r,s)}(X,Y) 
=\begin{bmatrix}
0 & \left((X-r)+i(Y-s)\right)^{*}\\
\left((X-r)+i(Y-s)\right) & 0
\end{bmatrix} 
\end{equation*}
and so 
\[
\left|\det\left(L_{(r,s)}(X,Y)\right)\right| =\left|\det\left((X+iY)-(r+is)\right)\right|^{2}.
\]
If we use a complex variable $z=r+is$ on the right that becomes the
square of the absolute value of the usual characteristic polynomial
of $X+iY$. Therefore
\begin{equation}
\label{eqn:twofoldCliffordVSspectrumXplusiY}
(r,s)\in\Lambda(X,Y) \iff  r+is\in\sigma(X+iY).
\end{equation}

\begin{example}
Consider the two matrices 
\[
X=\begin{bmatrix}
0 & 1\\
1 & 0
\end{bmatrix} ,\quad Y=\begin{bmatrix}
0 & -i\\
i & 0
\end{bmatrix} .
\]
Then
\[
X+iY=\begin{bmatrix}
0 & 2\\
0 & 0
\end{bmatrix} 
\]
which has spectrum $\{0\}$. Thus the Clifford spectrum of $(X,Y)$
is just the set $\{(0,0)\}$. On the other hand, the Laplace spectrum
is the zero set of 
\begin{align*}
\det\left(\begin{bmatrix}
-r & 1\\
1 & -r
\end{bmatrix} ^{2}  
+\begin{bmatrix}
-s & -i\\
i & -s
\end{bmatrix} ^{2}\right) 
 &   =\det\begin{bmatrix}
2+r^{2}+s^{2} & -2r+2is\\
-2r-2is & 2+r^{2}+s^{2}
\end{bmatrix} \\
 &   =4+r^{4}+2r^{2}s^{2}+s^{4}.
\end{align*}
The \emph{Laplace spectrum is the empty set} in this simple example.
Here endeth our interest in the Laplace spectrum.
\end{example}

\begin{prop}
For two Hermitian matrices of size $n$, the Clifford spectrum is
a finite set, with between $1$ and $n$ points as elements.
\end{prop}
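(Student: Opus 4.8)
The plan is to leverage the identity (\ref{eqn:twofoldCliffordVSspectrumXplusiY}), which reduces the problem entirely to a statement about the ordinary spectrum of a single matrix. Since $(r,s) \in \Lambda(X,Y)$ if and only if $r+is \in \sigma(X+iY)$, and the map $(r,s) \mapsto r+is$ is a bijection from $\mathbb{R}^2$ to $\mathbb{C}$, the Clifford spectrum $\Lambda(X,Y)$ is in bijective correspondence with $\sigma(X+iY)$. So it suffices to show that for an $n$-by-$n$ complex matrix $Z = X+iY$ (which is an arbitrary complex matrix, but here happens to be a sum of two Hermitians — though that imposes no restriction, since any $Z$ decomposes this way with $X = (Z+Z^*)/2$ and $Y = (Z-Z^*)/(2i)$), the spectrum $\sigma(Z)$ has between $1$ and $n$ elements.

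The key steps are then routine facts from linear algebra. First, $\sigma(Z)$ is nonempty: the characteristic polynomial $\det(Z - z I)$ is a degree-$n$ polynomial in the complex variable $z$ with leading coefficient $(-1)^n \neq 0$, hence it has at least one root by the fundamental theorem of algebra, so $|\Lambda(X,Y)| \geq 1$. Second, $\sigma(Z)$ has at most $n$ elements: that same characteristic polynomial has degree exactly $n$, so it has at most $n$ distinct roots, giving $|\Lambda(X,Y)| \leq n$. In particular $\Lambda(X,Y)$ is finite. One should also note both extremes are attained — $Z = I$ gives a single point, a diagonal matrix with distinct real entries gives $n$ points — so the bounds are sharp, though the proposition as stated does not require this.

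I expect there to be no real obstacle here; the entire content is packaged in (\ref{eqn:twofoldCliffordVSspectrumXplusiY}), which has already been established in the excerpt. The only point requiring a moment's care is confirming that the bijection $(r,s) \leftrightarrow r+is$ between $\mathbb{R}^2$ and $\mathbb{C}$ carries $\Lambda(X,Y)$ onto $\sigma(X+iY)$ as \emph{sets} of the same cardinality, so that counting roots of the characteristic polynomial of $X+iY$ directly counts points of the Clifford spectrum. Once that is observed, the claim is immediate.
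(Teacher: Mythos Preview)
Your proposal is correct and follows the same route as the paper: reduce via (\ref{eqn:twofoldCliffordVSspectrumXplusiY}) to the ordinary spectrum of the single $n$-by-$n$ matrix $X+iY$, then invoke the standard fact that this spectrum has between $1$ and $n$ points. The paper's proof is a one-line appeal to that equivalence; your version merely spells out the underlying linear-algebra facts (fundamental theorem of algebra, degree of the characteristic polynomial) and notes sharpness, none of which is needed but none of which is wrong.
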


\begin{proof}
This follows easily by the equivalence of the Clifford spectrum of
two Hermitian matrices with the ordinary spectrum of a single matrix.
\end{proof}

\begin{prop}
For $d$ commuting Hermitian matrices of size $n$, the Clifford spectrum
is a finite set, with between $1$ and $n$ points as elements.
\end{prop}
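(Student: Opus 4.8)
The plan is to reduce the whole computation to a block-diagonal one by exploiting simultaneous diagonalization. Since $X_{1},\dots,X_{d}$ are commuting Hermitian matrices, there is a unitary $U$ with $U^{*}X_{j}U=D_{j}$ diagonal for every $j$; write $\mu_{j,i}$ for the $i$-th diagonal entry of $D_{j}$ and collect the joint eigenvalues as $\boldsymbol{\mu}_{i}=(\mu_{1,i},\dots,\mu_{d,i})\in\mathbb{R}^{d}$ for $i=1,\dots,n$. Conjugating the localizer by the unitary $U\otimes I_{g}$ leaves singularity unchanged and yields
\begin{equation*}
(U^{*}\otimes I_{g})\,L_{\boldsymbol{\lambda}}(X_{1},\dots,X_{d})\,(U\otimes I_{g})=\sum_{j=1}^{d}(D_{j}-\lambda_{j})\otimes\gamma_{j}.
\end{equation*}
Since each $D_{j}-\lambda_{j}$ is diagonal, the decomposition of $\mathbb{C}^{n}\otimes\mathbb{C}^{g}$ along the standard basis $e_{1},\dots,e_{n}$ of $\mathbb{C}^{n}$ into the $n$ summands $\mathbb{C}e_{i}\otimes\mathbb{C}^{g}$ is invariant, and on the $i$-th summand the operator acts as the $g$-by-$g$ matrix $M_{i}(\boldsymbol{\lambda})=\sum_{j=1}^{d}(\mu_{j,i}-\lambda_{j})\gamma_{j}$. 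Hence $L_{\boldsymbol{\lambda}}(X_{1},\dots,X_{d})$ is singular if and only if at least one of the blocks $M_{i}(\boldsymbol{\lambda})$ is singular.

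The key step is then to square a block and invoke the Clifford relations (\ref{eq:gamma_rep}) exactly as in (\ref{eq:square_of_Localizer}): the cross terms cancel in pairs because $\gamma_{j}\gamma_{k}=-\gamma_{k}\gamma_{j}$ for $j\neq k$, and $\gamma_{j}^{2}=I$, so
\begin{equation*}
M_{i}(\boldsymbol{\lambda})^{2}=\Bigl(\sum_{j=1}^{d}(\mu_{j,i}-\lambda_{j})^{2}\Bigr)I_{g}=\bigl\Vert\boldsymbol{\mu}_{i}-\boldsymbol{\lambda}\bigr\Vert^{2}\,I_{g}.
\end{equation*}
Since a matrix is singular precisely when its square is singular, $M_{i}(\boldsymbol{\lambda})$ is singular if and only if $\Vert\boldsymbol{\mu}_{i}-\boldsymbol{\lambda}\Vert^{2}=0$, i.e. $\boldsymbol{\lambda}=\boldsymbol{\mu}_{i}$. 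Combining this with the previous paragraph gives $\Lambda(X_{1},\dots,X_{d})=\{\boldsymbol{\mu}_{1},\dots,\boldsymbol{\mu}_{n}\}$, the set of joint eigenvalue vectors.

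Finally I would read off the count: this set is nonempty (every $n$-by-$n$ matrix has at least one eigenvalue, consistent with Kisil's nonemptiness result quoted in the introduction) and is the image of an $n$-element list, hence has between $1$ and $n$ elements, with fewer than $n$ exactly when some joint eigenvalues coincide. I do not expect a genuine obstacle; the only point requiring a moment's care is the compatibility of the simultaneous diagonalization with the tensor-product structure — that is, checking that the subspaces $\mathbb{C}e_{i}\otimes\mathbb{C}^{g}$ really are invariant under $\sum_{j}(D_{j}-\lambda_{j})\otimes\gamma_{j}$ — after which everything is the same Clifford identity already recorded in (\ref{eq:square_of_Localizer}).
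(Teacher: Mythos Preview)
Your argument is correct and complete. It differs in approach from the paper's proof, which is essentially a one-line citation: the paper invokes Kisil's result that for commuting Hermitian operators the Clifford spectrum coincides with the ordinary joint (Taylor) spectrum, and then appeals to the spectral theorem to conclude that the joint spectrum of $n$-by-$n$ commuting Hermitian matrices is a nonempty set of at most $n$ points.

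Your route is more elementary and entirely self-contained: you simultaneously diagonalize, block-decompose the localizer along the joint eigenbasis, and use the Clifford identity $\bigl(\sum_j c_j\gamma_j\bigr)^{2}=\bigl(\sum_j c_j^{2}\bigr)I$ (the commuting, scalar instance of (\ref{eq:square_of_Localizer})) to pin down exactly when each block is singular. In effect you reprove, in the finite-matrix setting, the very equivalence the paper quotes from Kisil, and you obtain the sharper explicit description $\Lambda(X_1,\dots,X_d)=\{\boldsymbol{\mu}_1,\dots,\boldsymbol{\mu}_n\}$ rather than merely a cardinality bound. The paper's version is shorter because it outsources this computation; yours would stand on its own without the Kisil reference.
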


\begin{proof}
Now we use the equivalence of the Clifford spectrum of commuting Hermitian
matrices with the ordinary joint spectrum. The appropriate version
of the spectral theorem tells us the joint spectrum is a nonempty
finite set of at most $n$ points.
\end{proof}

The argument leading to the equivalence (\ref{eqn:twofoldCliffordVSspectrumXplusiY})
is valid for Hermitian operators as well.  One example is worth examining.

\begin{example}
Let $P$ and $Q$ be the classical position and momentum
operators on $L^{2}(\mathbb{R})$, so 
\[
Qf(x)=xf(x),\quad Pf(x)=-if'(x).
\]
We will see that joint Clifford spectrum $\Lambda(P,Q)$ is all of
$\mathbb{R}^1$.
This is because of its relation with the spectrum of $P+iQ$. Looking
more closely, let us look for eigenvectors, so $f$ with 
\[
(Q+iP)f=(r+is)f.
\]
(If we look at the whole localizer, we need to solve
\[
\begin{bmatrix}
0 & \left(Q-r\right)-i\left(P-s\right)\\
\left(Q-r\right)+i\left(P-s\right) & 0
\end{bmatrix}
\begin{bmatrix}
g\\
f
\end{bmatrix}
=
\begin{bmatrix}
0\\
0
\end{bmatrix}
\]
which is essentially the same.)
This translates to
\[
f'(x)=\left(\alpha-x\right)f(x)
\]
where $\alpha=r+is$. Then
\[
f(x)=e^{-\frac{1}{2}(x-r)^{2} + isx}.
\]
is a (non-normalized) square-integrable solution to this differential equation for
$\alpha = r+is$.  Such a Gaussian is well known to have limited deviation in 
position and momentum, so the spectral localizer method captures what we would
expect in this example.
\end{example}

The previous example is in some sense the limit  as $n \rightarrow \infty$
of an example we consider in Section~\ref{sec:Four-Hermitian-matrices}.  There
the four Hermitian matrices are the Hermitian and anti-Hermitian parts of the usual clock and shift unitary matrices.  

What physicsts call the clock and shift, mathematicians often call Voiculescu's unitaries.  We
want $U$ to be the cyclic shift and $V$ to be a diagonal unitary with eigenvalues winding around the unit circle, specifically as as follows.
For each $n \geq 2$ we define these two $n$-by-$n$ unitary matrices as
\begin{equation}
\label{eqn:define_U}
U_n= \begin{bmatrix} 0 & & &  & 1 \\ 1 & 0 & &  & \\ & \ddots & \ddots & &  \\  & & 1 & 0  \\  & & & 1 & 0  \end{bmatrix}
\end{equation}
and
\begin{equation}
\label{eqn:define_V}
V =  \begin{bmatrix} e^{2 \pi i/n} & & &  &  \\  &  e^{4 \pi i/n}  & &  & \\ &  & \ddots & & \\  & & & e^{2 \pi i (n-1)/n}  \\  & & & & 1  \end{bmatrix}.
\end{equation}

  Now arguing  heuristically, and from a physics perspective, suppose that space is compactified.  Suppose space has diameter is $L$, and further suppose that it is discretized, with lattice spacing $\epsilon$. If $k$ is the row number, $k \in \{1, \cdots, n \}$, we have 
\begin{equation*} k = \frac{x}{\epsilon} \; , \; n = \frac{L}{\epsilon} \end{equation*} 
Therefore, 
\begin{equation*} U_{k,k-1} \approx 1 - \epsilon \frac{\partial}{\partial x} = 1 -i \epsilon p \end{equation*} 
and
\begin{equation*} V_{kk} = e^{2 \pi i k/n} = e^{x/L} \approx 1+ \frac{x}{L} .
\end{equation*} 
This implies that joint spectrum of $U$ and $V$ would roughly correspond to the joint spectrum of $p$ and $x$, if we will be looking for the eigenvalues that are very large rather than very small. If the size of $U$ and $V$ gets larger and larger, the number of eigenvalues would increase as well, which intuitively explains why in the limit we will get a continuous spectrum.

\section{Three Hermitian matrices }  \label{sec:Three-Hermitian-matrices}

In the case of three matrices, there is a range of interesting examples for which we can plot their Clifford spectrum using computer algebra package.  We use the the Pauli Spin matrices for the Clifford representation so that,
\begin{equation}
\gamma_1=\begin{bmatrix}
0 & 1\\
1 & 0
\end{bmatrix},
\ 
\gamma_2=\begin{bmatrix}
0 & -i\\
i & 0
\end{bmatrix},
\ 
\gamma_3=\begin{bmatrix}
1 & 0\\
0 & -1
\end{bmatrix} .\label{eq:CliffordRepThree}
\end{equation}
The localizer now becomes,
\begin{equation*}
L_{(x, y, z)}(A,B,C) 
 =  \begin{bmatrix}
  (C -zI)  & (A -xI) -i(B-yI)  \\ 
 (A -xI) + i(B-yI) & -(C -zI)
  \end{bmatrix} .
\end{equation*}

\begin{figure}
\includegraphics{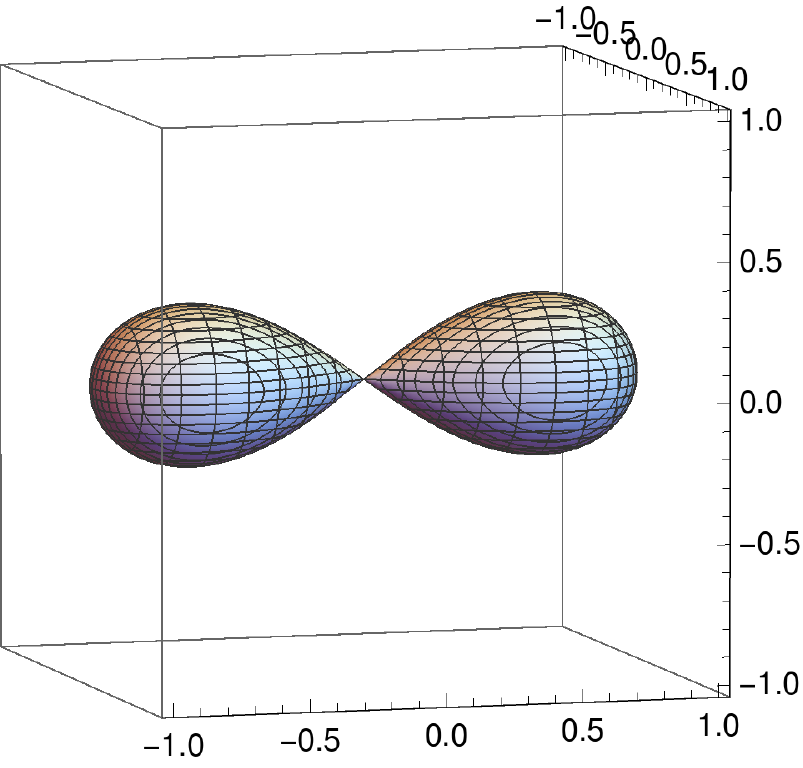} 
\caption {The Clifford spectrum of the three scaled Pauli Spin  $\Lambda \left(\tfrac{1}{2} \sigma_x, \sigma_y, \tfrac{1}{2} \sigma_z \right)$ as explained in 
Example~\ref{exa:lemniscate}.  The index at a point inside either lobe is $1$.  As always, for points on the outside the index is $0$.  See the supplemental files
\texttt{Lemniscate.*} for the calculations.
\label{fig:lem}}
\end{figure}

\begin{example}
\label{exa:Pauli_sphere}
 The first example with Clifford spectrum a surface was due to by Kisil \cite{KisilCliffordSpectrum}, and we repeat that here.  The Pauli Spin matrices themselves are the three Hermitian matrices we consider.  The following can be computed
by hand, but using symbolic algebra is preferred.  We find
\begin{equation*}
\text{char} \left(\sigma_x, \sigma_y, \sigma_z \right) 
 = (x^2 + y^2 + z^2 - 1)(x^2 + y^2 + z^2 +3)
\end{equation*}
and that here the Clifford spectrum is the unit sphere. 
\end{example}

\begin{example}
\label{exa:lemniscate}
A slight modification of the previous example leads to the Clifford spectrum
being a surface but not a manifold.  We simply rescale  some of the Pauli Spin matrices and consider $\tfrac{1}{2} \sigma_x$,  $\sigma_y$ and $\tfrac{1}{2} \sigma_z$.  The characteristic polynomial is now
\begin{equation*}
\textnormal{char} \left(\tfrac{1}{2} \sigma_x, \sigma_y, \tfrac{1}{2} \sigma_z \right)
= (x^2 + y^2 + z^2 )^2 + 2z^2 +2x^2 - y^2 
\end{equation*} 
Since $(x^2 + y^2)^2 + 2x^2 - y^2 = 0$ describes a lemniscate of Bernoulli, the surface here is a rotated lemniscate as illustrated by Figure  \ref{fig:lem}.
\end{example}

Mathematica and other computer algebra programs can produce accurate and compelling pictures of the Clifford  spectrum in many examples, but there are limitations.
Some rather simple examples can lead to the plot being incomplete, as we
will demonstrate.  We are asking a computer to verify that a certain set is infinite, which is too big of a request.  Two methods are available to verify the results of some
examples.  The first is to factor the characteristic polynomial and identify the zero-sets of the factors, which might be impossible.  The second is to employ the information we get from the $K$-theory indices associated to almost commuting matrices \cite{LoringPseudospectra}.  These generally must be zero when the Clifford spectrum is a finite set, so
calculating a single index can tell us that that a certain spectrum  is an infinite set.

The index we start with is the most basic of those introduced in \cite{LoringPseudospectra}.  It is defined in terms of the signature.  For an invertible Hermitian matrix, the signature is the the number of positive eigenvalues, minus the number of negative eigenvalues, of that matrix.

\begin{defn}
The \emph{index at}  $\boldsymbol{\lambda}$ for an ordered triple of non-commuting Hermitian matrices $X_1, X_2, X_3$ is defined only when $\boldsymbol{\lambda}$ is not in $\Lambda(X_1, X_2, X_3),$ and is given by,
\[
\textnormal{Ind}_{\boldsymbol{\lambda}} (X_1, X_2, X_3) = \dfrac{1}{2} \textnormal{Sig} \left(L_{\boldsymbol{\lambda}} (X_1, X_2, X_3) \right).
\]
\end{defn} 

\begin{figure}
\includegraphics{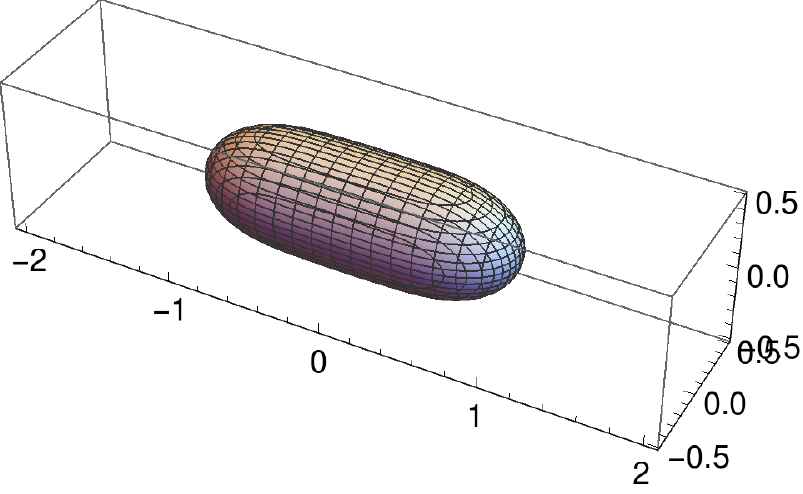}
\includegraphics{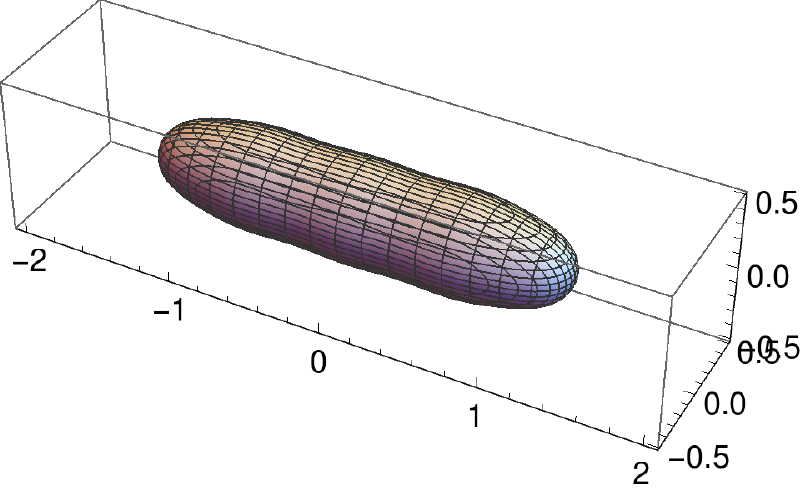}\\
\includegraphics{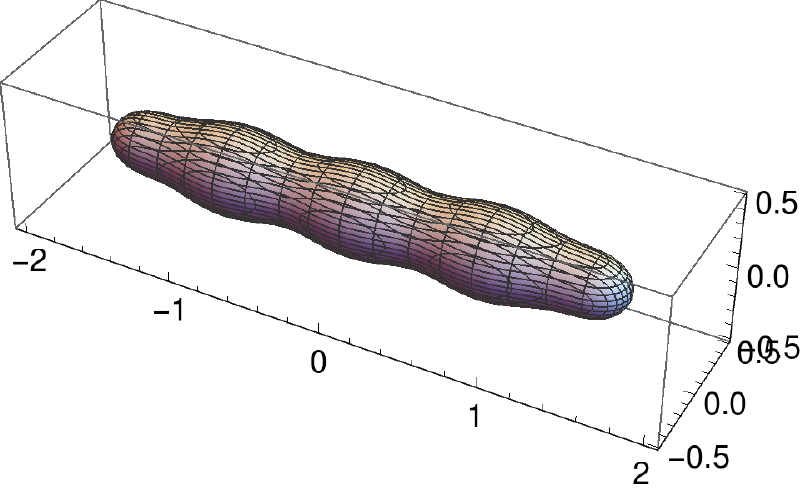}
\includegraphics{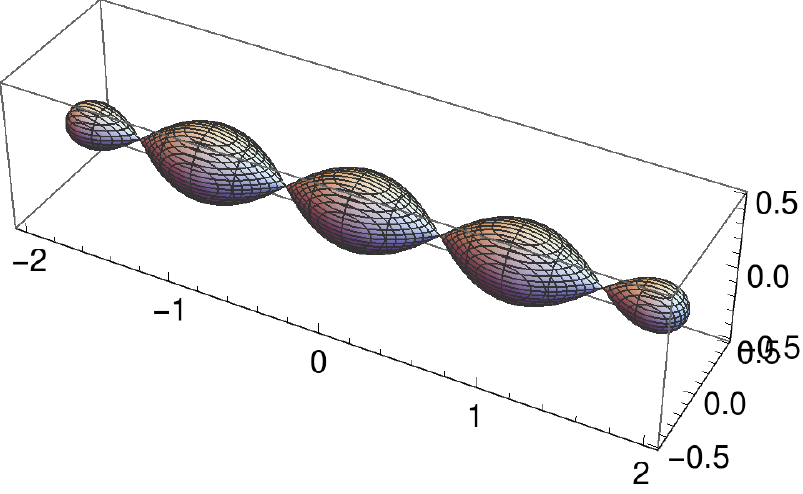}\\
\caption{The Clifford spectrum of the matrices from Example~\ref{exa:higher_lemniscate}.  The values of $t$, starting at the top-left, $\tfrac{1}{2}$, $t=\tfrac{2}{3} \dots $, $t=\tfrac{5}{6} $, $t=1$. See supplemental files \texttt{FSscaled5Croped.*} as well as the 
video file  \texttt{C5scale\_cmprsd.avi} that shows the Clifford spectrum at many more points on the path.
 \label{fig:FS5}} 
\end{figure}

The index at the origin it $1$ for the Pauli spin matrices, as in 
Example~\ref{exa:Pauli_sphere}.  
Inside either lobe of the lemniscate example this index is also $1$.
These facts can be calculated by hand, or one can
see the supplemental files \texttt{PauliSpinTwoSphere.*} and \texttt{Lemniscate.*} for the calculations.

Consider a path $\boldsymbol{\lambda}_t$ in $\mathbb{R}^3$ with fixed $X_1, X_2, X_3$, and
assume that
\begin{equation*}
\textnormal{Ind}_{\boldsymbol{\lambda}_{t_0}} (X_1, X_2, X_3)
\neq
\textnormal{Ind}_{\boldsymbol{\lambda}_{t_1}} (X_1, X_2, X_3).
\end{equation*}
Since the localizer is Hermitian, the only for this change to occur is if
the localizer becomes singular at some intermediate $t$.  Thus any path between two 
points with differing index must cross the Clifford spectrum.

It is easy to prove that if $\boldsymbol{\lambda}$ is larger than
$\|L_{\mathbf{0}}{(X_1, X_2, X_3)}\|$ then the index at $\boldsymbol{\lambda}$
 equals zero.
Thus proving that the index to be nonzero at a single point shown
that the Clifford spectrum separates that point from infinity.
This proves that in that instance the Clifford spectrum is
not a finite set.

Already with $2$-by-$2$ matrices, we start to see interesting topology emerge.  Moving up to  $5$-by-$5$ and  $6$-by-$6$ and taking paths of Hermitian matrices, we see the suggestions of  interesting patterns.  Here we present some of what we found.  We encourage the reader to use our Mathematica supplemental files, or the  SageMath code listing in \cite{sykora2016fuzzy}, as a basis to explore more examples.

\begin{figure}
\includegraphics{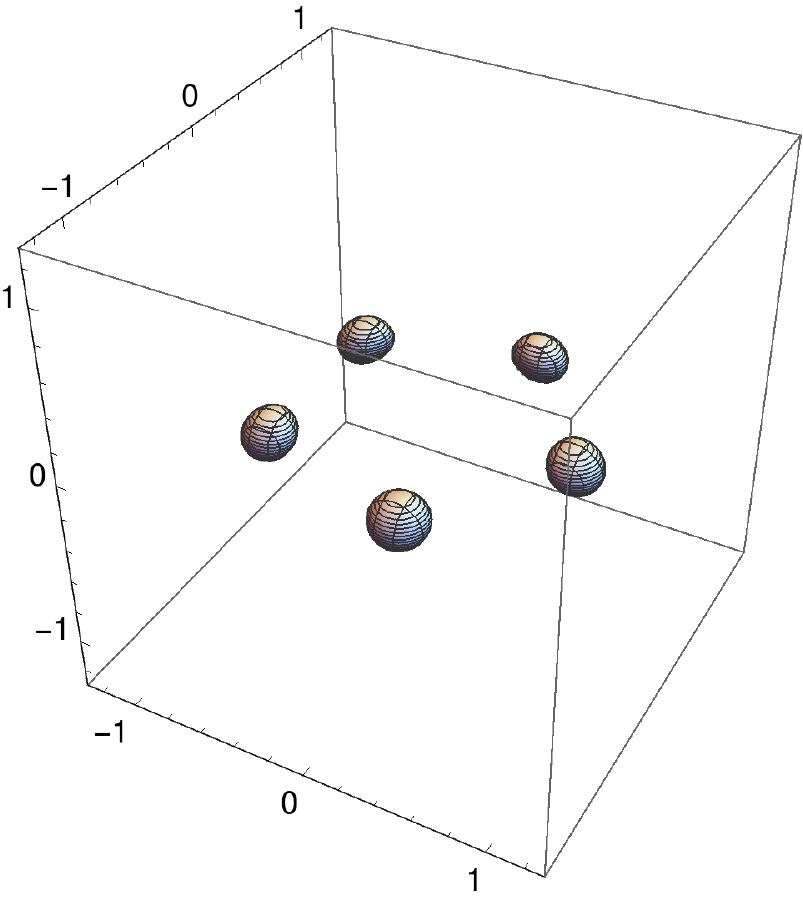}
\includegraphics{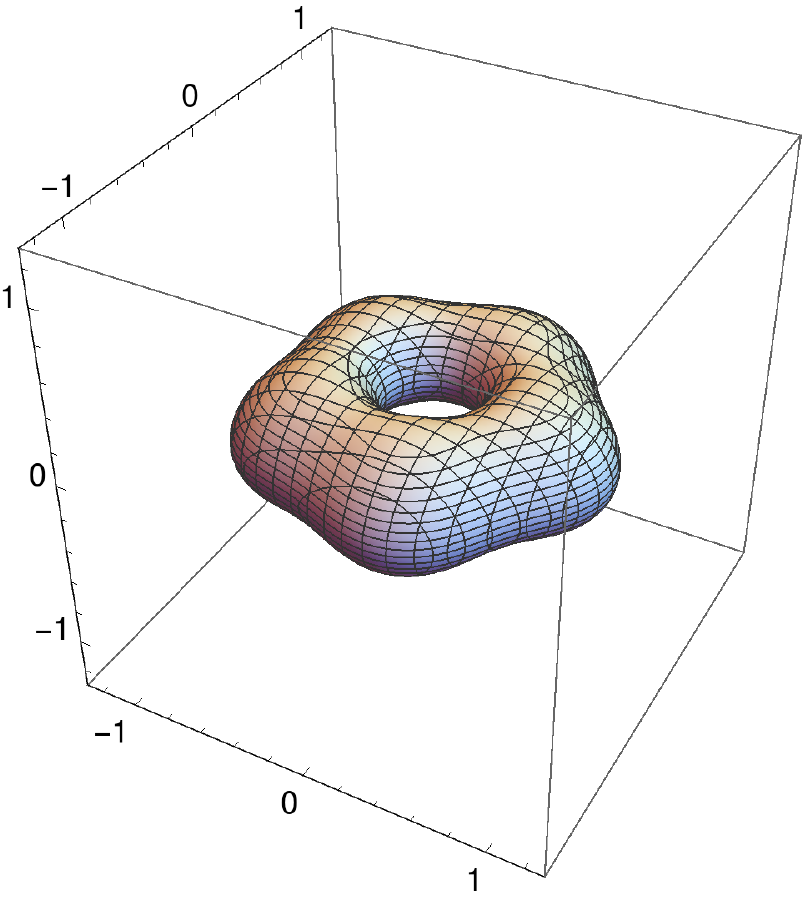}\\
\includegraphics{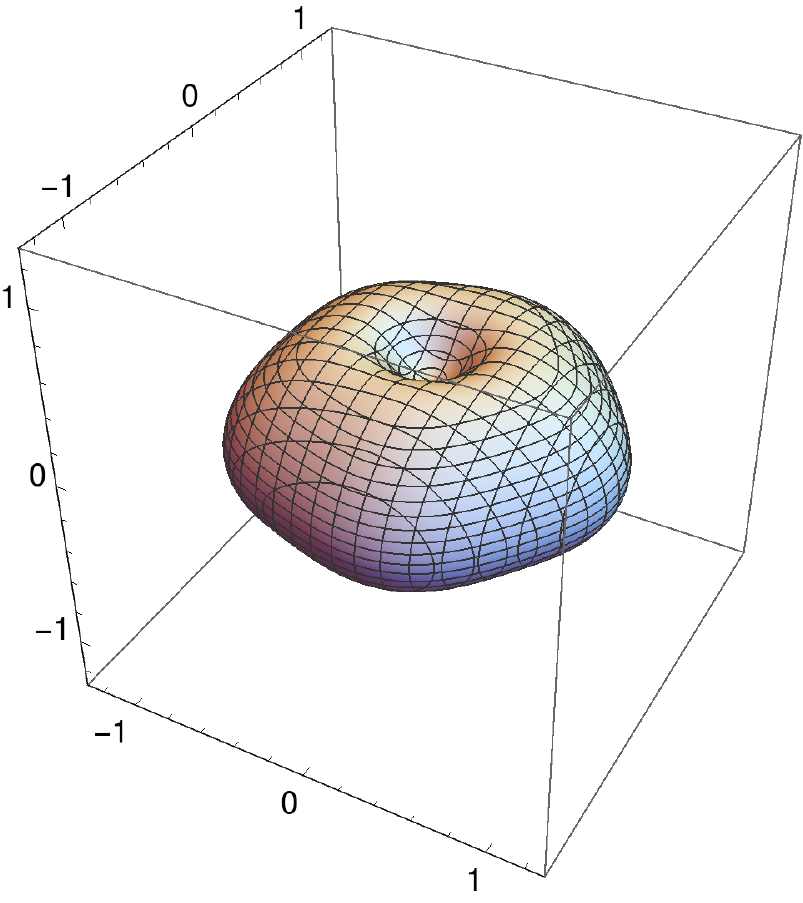}
\includegraphics{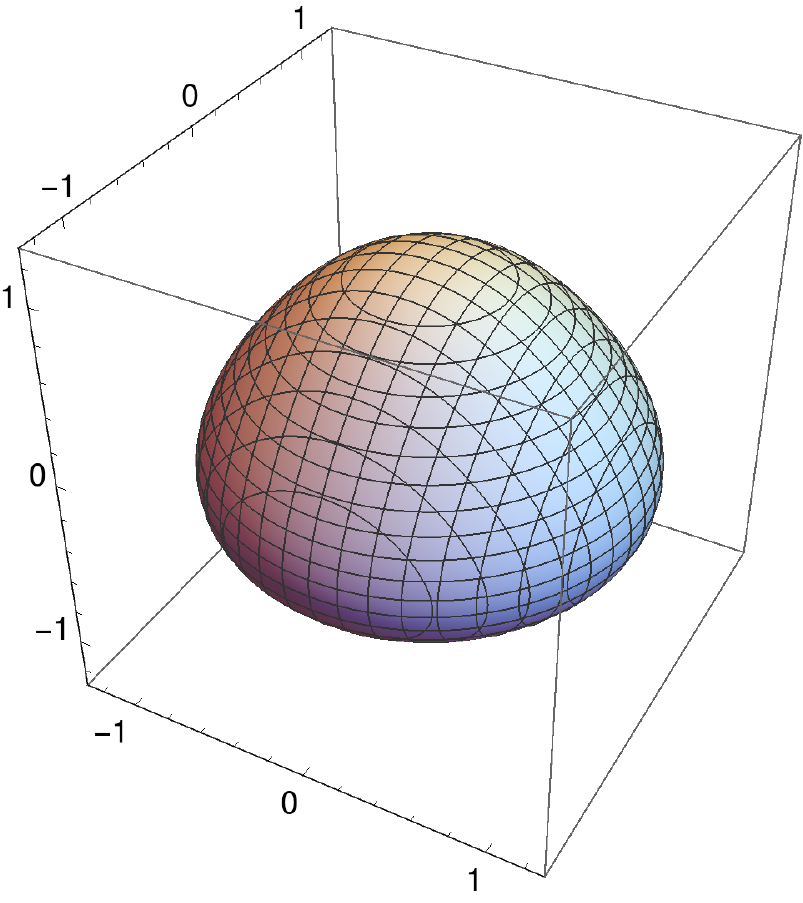}
\caption {The Clifford spectrum of of Example~\ref{exa:torus2sphere} for $R=0.9$ and from top-left, $r=\tfrac{4}{10}$, $r=\tfrac{6}{10}$, $r=\tfrac{7}{10},$ and $r=1$. The Clfford spectrum for more values of $r$ can be seen in the supplementary file \texttt{F5scale\_cmprsd.*}, and the code to create these plots is in the supplementary file \texttt{FSscaled5Croped.avi}.
\label{fig:TS5}}
\end{figure}

\begin{example}
\label{exa:higher_lemniscate}
Berenstein, Dzienkowski, and Lashof-Regas \cite{berenstein2012matrix,berenstein2015spinning} looked at
the matrices generating a fuzzy sphere.  We consider here similar matrices,
\[
A =  \begin{bmatrix} 2 & 0 & 0 & 0 & 0 \\  0 & 1 & 0 & 0 & 0 \\  0 & 0 & 0 & 0 & 0 \\  0 & 0 & 0 & -1 & 0 \\  0 & 0 & 0 & 0 & -2 \end{bmatrix},
\ 
 B =  \begin{bmatrix} 0 & \tfrac{1}{4} & 0 & 0 & 0 \\  \tfrac{1}{4}  & 0 & \tfrac{1}{4}  & 0 & 0 \\  0 & \tfrac{1}{4} & 0 & \tfrac{1}{4} & 0 \\  0 & 0 & \tfrac{1}{4}  & 0 & \tfrac{1}{4}  \\  0 & 0 & 0 & \tfrac{1}{4}  & 0 \end{bmatrix},
\ 
 C =  \begin{bmatrix} 0 & -\tfrac{i}{4} & 0 & 0 & 0 \\  \tfrac{i}{4} & 0 & -\tfrac{i}{4}  & 0 & 0 \\  0 & \tfrac{i}{4} & 0 & -\tfrac{i}{4} & 0 \\  0 & 0 & \tfrac{i}{4}  & 0 &-\tfrac{i}{4}  \\  0 & 0 & 0 & \tfrac{i}{4}  & 0 \end{bmatrix}.
\]
By rescaling one of these matrices, we were able to see a higher iteration of the lemniscate surface.  Specifically we looked along the path $(tA,B,C).$  We show in Figure~\ref{fig:FS5} the Clifford spectrum at some points along this path.
\end{example}

\begin{figure}
\includegraphics{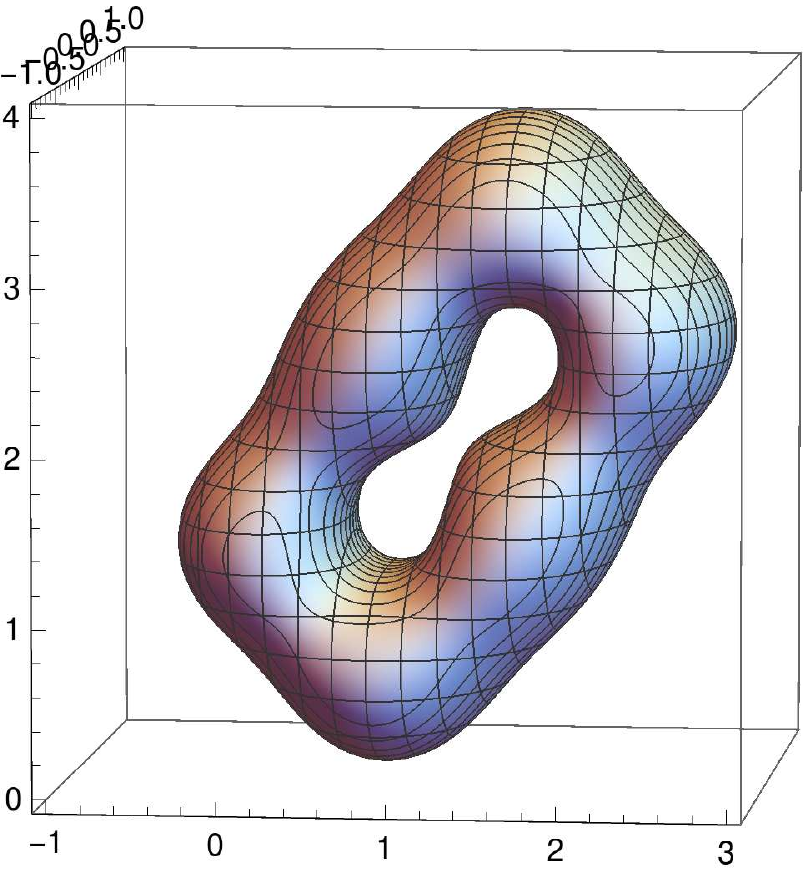}
\includegraphics{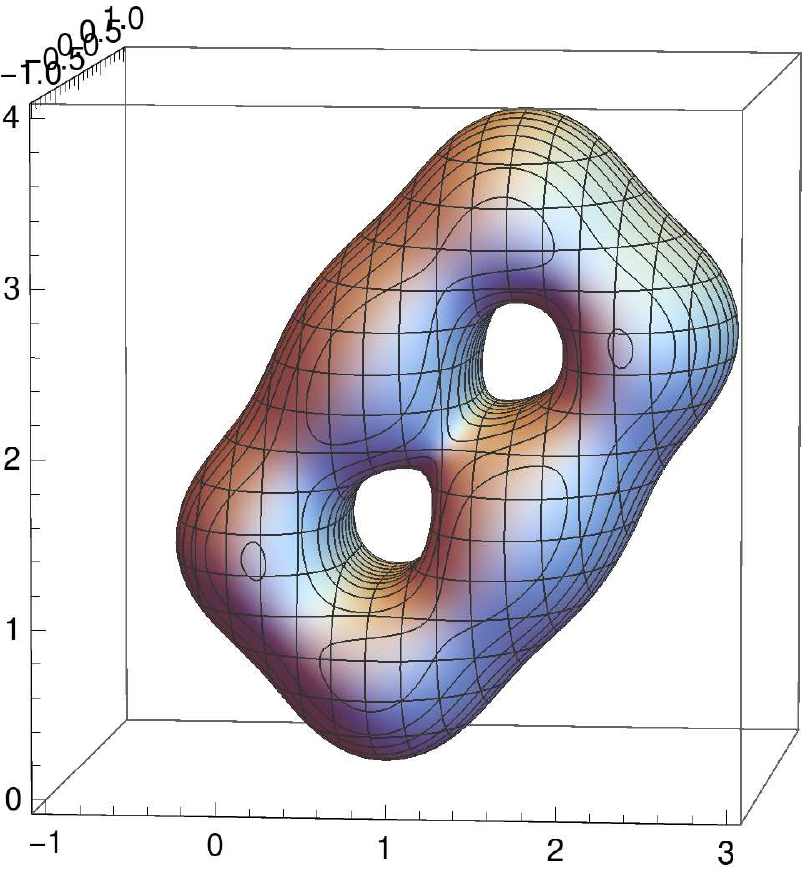}\\
\includegraphics{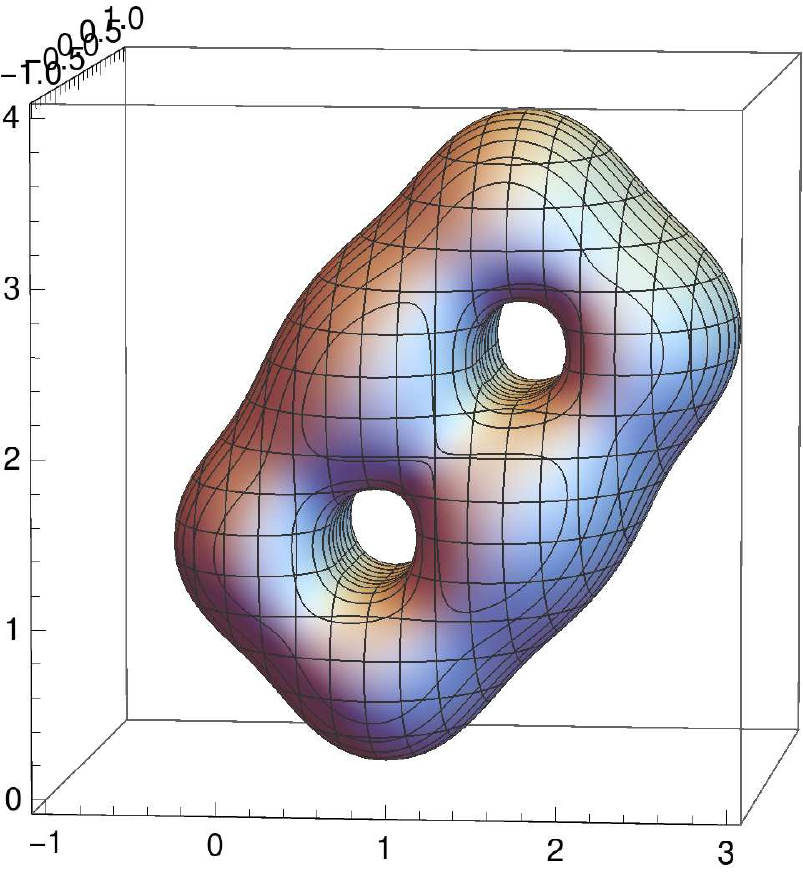}
\includegraphics{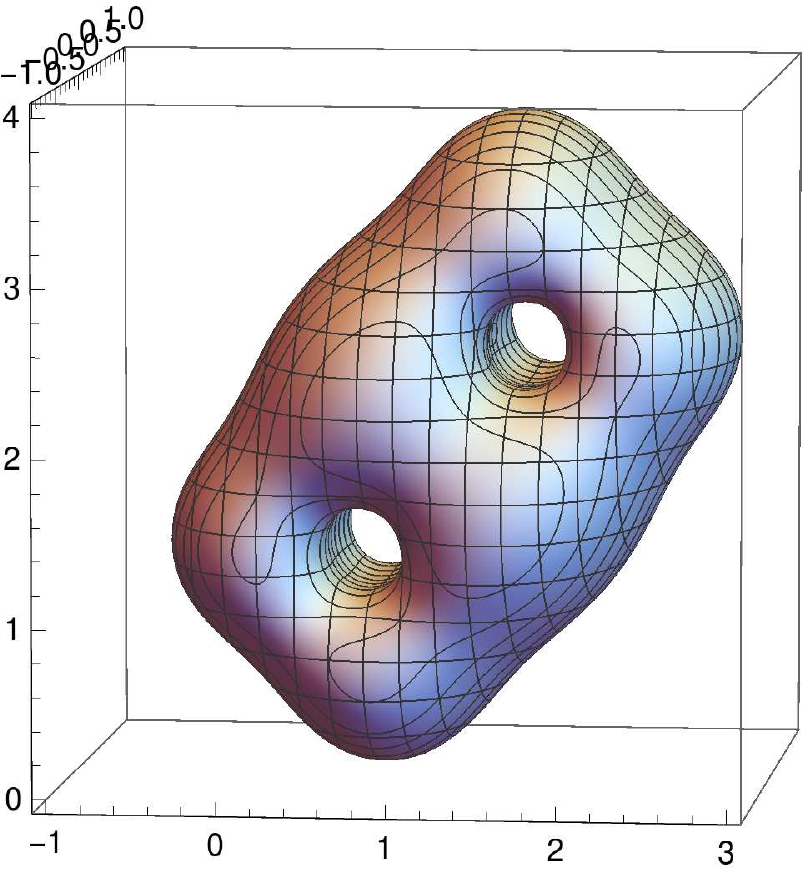}\\
\caption{A two-holed torus, and a deformation of that, arising as the Clifford
spectrum of the three matrices in Example~\ref{exa:changing_genus}. 
Starting at the top-right, the values of $r$ used are  $r=\tfrac{1}{2}$, $r=\tfrac{2}{3}$,
 $r=\tfrac{5}{6}$,  $r=1$.  The Clfford spectrum for more values of $r$ can be seen in the supplementary file \texttt{lowering\_genus\_cmprsd.avi}, and the code to create these plots is in the supplementary files \texttt{two\_holes.*}.  The index calculation in shown in supplementary files \texttt{two\_holes\_6\_index.*}. 
\label{fig:two_holes_to_one}}
\end{figure}

\begin{example}
\label{exa:torus2sphere}
This example is similar to one in \cite{berenstein2015spinning}, illustrating a transition in the Clifford spectrum between a torus and a sphere. As we want a torus, it is not surprising we start with the clock $V=V_n$ and shift $U=U_n$ unitaries from (\ref{eqn:define_U}) and (\ref{eqn:define_V}).  In Section~\ref{sec:Four-Hermitian-matrices} we will consider Clifford spectrum of four Hermitian matrices and see again a torus.  Here we want three matrices, so inspired by the usual parameterization of a torus embedded in three-space we define
\begin{eqnarray*}
\begin{aligned}
A &= \tfrac{1}{2} \left(R +  \tfrac{r}{2} U^* + \tfrac{r}{2} U \right) V^* 
+ \tfrac{1}{2} V \left(R+ \tfrac{r}{2} U^* +\tfrac{r}{2} U \right) \\
B &=\tfrac{i}{2} \left(R +  \tfrac{r}{2} U^* + \tfrac{r}{2} U \right) V^* - \tfrac{i}{2} V \left(R+ \tfrac{r}{2} U^* +\tfrac{r}{2} U\right) \\
C &=  \tfrac{ri}{2} U^* -  \tfrac{ri}{2}U.
\end{aligned}
\end{eqnarray*}
We compute this specifically with $n=5$, outer radius $R=0.9$ and variable inner radius $r$.
For four values of $r$ lead to the Clifford spectrum shown in Figure~\ref{fig:TS5}.
\end{example}

\begin{example}
\label{exa:changing_genus}
Taking a hint from \cite{sykora2016fuzzy}  we consider
\begin{eqnarray*}
\begin{aligned}
X&=\begin{bmatrix}
\frac{4}{5} & \frac{1}{2} & \frac{1}{2}\\
\frac{1}{2} & 0 &  & \frac{1}{2}\\
\frac{1}{2} &  & \frac{8}{5} & \frac{r}{2} & \frac{1}{2}\\
 & \frac{1}{2} & \frac{r}{2} & \frac{4}{5} &  & \frac{1}{2}\\
 &  & \frac{1}{2} &  & \frac{12}{5} & \frac{1}{2}\\
 &  &  & \frac{1}{2} & \frac{1}{2} & \frac{8}{5}
\end{bmatrix},
\quad 
Y=\begin{bmatrix}
0 & -\frac{i}{2} & -\frac{i}{2}\\
\frac{i}{2} & 0 &  & -\frac{i}{2}\\
\frac{i}{2} &  & 0 & -\frac{ir}{2} & -\frac{i}{2}\\
 & \frac{i}{2} & \frac{ir}{2} & 0 &  & -\frac{i}{2}\\
 &  & \frac{i}{2} &  & 0 & -\frac{i}{2}\\
 &  &  & \frac{i}{2} & \frac{i}{2} & 0
\end{bmatrix}\\
Z& =\begin{bmatrix}
0\\
 & \frac{13}{10}\\
 &  & \frac{13}{10}\\
 &  &  & \frac{13}{5}\\
 &  &  &  & \frac{13}{5}\\
 &  &  &  &  & \frac{39}{10}
\end{bmatrix}
\end{aligned}
\end{eqnarray*}
which, for $r=1$ is the smallest triples of matrices Sykora found that had
Clifford spectrum a two-holed torus.  
We computed numerically that index is  for $r=1$ at $(2, 0, 0.25)$ inside the two-holed torus to confirm we actually have a surface and not a cloud of points. 
The plots of the Clifford spectrum for several values of $r$ are shown in
Figure~\ref{fig:two_holes_to_one} .
\end{example}

\section{Four Hermitian matrices } \label{sec:Four-Hermitian-matrices}

We need to make a choice of $\gamma_1, \dots, \gamma_4$, and warn the reader that these are related to but not equal to the Dirac matrices.  The Dirac matrices square sometimes to $1$ and sometimes to $-1$.  Here we need the relations (\ref{eq:gamma_rep}) which
dictate that the matrices are all Hermitian and square to $1$.  Moreover, we have no use
for a $\gamma_0$ as we just want a linearly independent set.  We use the Pauli spin matrices for convenience, but there is no connection here with the spin of a particle.

Our choice here is as follows.
\begin{equation}
\label{eqn:four_gammas}
\begin{aligned}
\gamma_1 &= \sigma_x \otimes (-\sigma_y) 
= \begin{bmatrix}
0 & 0 & 0 & i \\ 
0 & 0 & i & 0 \\ 
0 & -i & 0 & 0 \\ 
-i & 0 & 0 & 0 \\ 
\end{bmatrix}
, \quad
\gamma_2 =  \sigma_y \otimes (-\sigma_y) 
= \begin{bmatrix}
0 & 0 & 0 & -1 \\ 
0 & 0 & 1 & 0 \\ 
0 & 1 & 0 & 0 \\ 
-1 & 0 & 0 & 0 \\ 
\end{bmatrix}
\\
\gamma_3 &=  \sigma_z \otimes (-\sigma_y) 
= \begin{bmatrix}
0 & 0 & i & 0 \\ 
0 & 0 & 0 & -i \\ 
-i & 0 & 0 & 0 \\ 
0 & i & 0 & 0 \\ 
\end{bmatrix}
, \quad
\gamma_4 =  I_2 \otimes (\sigma_x) 
= \begin{bmatrix}
0 & 0 & 1 & 0 \\ 
0 & 0 & 0 & 1 \\ 
1 & 0 & 0 & 0 \\ 
0 & 1 & 0 & 0 \\ 
\end{bmatrix}
\end{aligned}
\end{equation}
The advantage these have is each $\gamma_j$ is block off-diagonal.  We can thus define
the \emph{reduced localizer}
\begin{equation} 
\label{eqn:reduced_localizer}
\widetilde{L}_{\boldsymbol{\lambda}}(X_1,X_2,X_3,X_4) 
 =  \sum_{k=1}^3  (X_k - \lambda_k) \otimes \widetilde{\gamma}_k 
\end{equation}
in terms of the upper-right blocks of the $\gamma_j$.  Thus
\begin{equation*} 
\widetilde{\gamma}_1 = i\sigma_x, \ 
\widetilde{\gamma}_2 = i\sigma_y, \ 
\widetilde{\gamma}_3 = i\sigma_z, \ 
\widetilde{\gamma}_4 = I_2.
\end{equation*}
With this notation, the localizer becomes
\begin{equation*} 
\widetilde{L}_{\boldsymbol{\lambda}}(X_1,X_2,X_3,X_4) 
=
\begin{bmatrix}
0 & \widetilde{L}_{\boldsymbol{\lambda}}(X_1,X_2,X_3,X_4) \\
(\widetilde{L}_{\boldsymbol{\lambda}}(X_1,X_2,X_3,X_4))^* & 0
\end{bmatrix}
\end{equation*}
and the characteristic polynomial can be computed via the formula
\begin{equation*} 
\left|\textnormal{char}_{\boldsymbol{\lambda}}(X_1,X_2,X_3,X_4) \right|
= \left| \det\left( \widetilde{L}_{\boldsymbol{\lambda}}(X_1,X_2,X_3,X_4)  \right) \right|^2
\end{equation*}
Thus we have what we call the \emph{reduced characteristic polynomial} 
\begin{equation*} 
\det\left( \widetilde{L}_{\boldsymbol{\lambda}}(X_1,X_2,X_3,X_4)  \right)
\end{equation*}
and we can compute the Clifford spectrum by setting that to zero.
In computer calculations, especially, we use $(w,x,y,z)$ in place of $(\lambda_1,\lambda_2,\lambda_3,\lambda_4)$.

We have a three examples, with Clifford spectrum zero-dimensional, two-dimensional, and three-dimensional.  The case of two-dimensional Clifford spectrum in four-space is the most difficult, as such a spectrum will not separate a point from infinity.  This means there will be no possible $K$-theory argument, and we are stuck with examining a complicated characteristic polynomial.  The significance of the reduced characteristic polynomial is that cuts down by half the degree of the polynomial we must study.

To get a torus in four space, we are able to use the Hermitian and anti-Hermitian parts of
the clock and shift unitaries.  These are all symmetric matices (equal under the transpose 
$(\mbox{--})^\mathrm{T}$) except the imaginary part of the shift, which is anti-symmetric.
The following lemma helps simplify things with that symmetry.

\begin{lem}
\label{lem:poly_symmetry_from_symmetry}
Suppose that $X_1,X_2,X_3,X_4$ are Hermitian matrices, that $X_1$, $X_3$ and $X_4$ are symmetric and $X_2$ is anti-symmetric.  Then 
\begin{equation*}
\det\left( \widetilde{L}_{(\lambda_1,-\lambda_2,\lambda_3,\lambda_4)}(X_1,X_2,X_3,X_4)  \right)
=
\det\left( \widetilde{L}_{(\lambda_1,\lambda_2,\lambda_3,\lambda_4)}(X_1,X_2,X_3,X_4)  \right).
\end{equation*}
\end{lem}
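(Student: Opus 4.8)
The plan is to find an explicit invertible matrix $S$ that conjugates $\widetilde{L}_{(\lambda_1,\lambda_2,\lambda_3,\lambda_4)}$ into $\widetilde{L}_{(\lambda_1,-\lambda_2,\lambda_3,\lambda_4)}$, possibly after an additional transpose, so that the two determinants agree. Recall that
\begin{equation*}
\widetilde{L}_{\boldsymbol{\lambda}}(X_1,X_2,X_3,X_4)
= (X_1-\lambda_1)\otimes i\sigma_x + (X_2-\lambda_2)\otimes i\sigma_y + (X_3-\lambda_3)\otimes i\sigma_z + (X_4-\lambda_4)\otimes I_2 .
\end{equation*}
The only term whose sign on $\lambda_2$ we want to flip is the $X_2$-term, and $X_2$ is distinguished from the others by being anti-symmetric. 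So I would take the transpose of the whole reduced localizer: since $(M\otimes N)^{\mathrm T}=M^{\mathrm T}\otimes N^{\mathrm T}$, and $\sigma_x^{\mathrm T}=\sigma_x$, $\sigma_z^{\mathrm T}=\sigma_z$, $I_2^{\mathrm T}=I_2$, while $\sigma_y^{\mathrm T}=-\sigma_y$, transposing negates the $\widetilde\gamma_2$ factor; meanwhile on the matrix side $X_1^{\mathrm T}=X_1$, $X_3^{\mathrm T}=X_3$, $X_4^{\mathrm T}=X_4$ are unchanged but $X_2^{\mathrm T}=-X_2$ flips as well, so the $X_2$-term picks up two sign changes and returns to itself. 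That is the wrong outcome: plain transpose fixes the whole operator and only tells us $\det$ is unchanged under transpose, which is trivially true.

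So I would instead combine the transpose with a fixed unitary conjugation acting on the $2\times2$ (gamma) factor that negates exactly one of $\sigma_x,\sigma_y,\sigma_z$. The key small fact is that conjugation by $\sigma_x$ sends $(\sigma_x,\sigma_y,\sigma_z)\mapsto(\sigma_x,-\sigma_y,-\sigma_z)$, by $\sigma_y$ sends it to $(-\sigma_x,\sigma_y,-\sigma_z)$, and by $\sigma_z$ to $(-\sigma_x,-\sigma_y,\sigma_z)$; each Pauli conjugation flips exactly the two it does not equal. The plan is: apply $M\mapsto (I_n\otimes\sigma_z)\,M^{\mathrm T}\,(I_n\otimes\sigma_z)$. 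On each summand $(X_k-\lambda_k)\otimes\widetilde\gamma_k$ this does $(X_k-\lambda_k)^{\mathrm T}\otimes(\sigma_z\widetilde\gamma_k^{\mathrm T}\sigma_z)$. For $k=1$: $X_1^{\mathrm T}=X_1$ and $\sigma_z(i\sigma_x)^{\mathrm T}\sigma_z=\sigma_z(i\sigma_x)\sigma_z=-i\sigma_x$ — sign flip, bad. I would therefore pick the Pauli conjugation that leaves $\widetilde\gamma_1$ and $\widetilde\gamma_3$ invariant under the combined transpose-and-conjugate, namely check each of $\sigma_x,\sigma_y,\sigma_z$ for the conjugator and verify by direct $2\times2$ computation which one yields the pattern (invariant on slots $1,3,4$, sign-flip on slot $2$); combined with the transpose flips $X_2^{\mathrm T}=-X_2$, $X_1^{\mathrm T}=X_1$, $X_3^{\mathrm T}=X_3$, $X_4^{\mathrm T}=X_4$, the net effect on the $X_2$-term should be a single sign change, i.e. exactly $\lambda_2\mapsto-\lambda_2$, while all other terms return to themselves. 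Since $\det(SMS^{-1})=\det M$ and $\det(M^{\mathrm T})=\det M$, the two reduced characteristic polynomials coincide.

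The main obstacle — really the only obstacle — is bookkeeping: getting the single correct choice of $2\times2$ conjugator so that the transpose-induced sign on the anti-symmetric slot and the conjugation-induced signs on the Pauli factors cancel on slots $1,3,4$ and reinforce (to a single flip) on slot $2$. This is a finite check over the three Pauli matrices and the four gamma-slots, carried out entirely with the relations $\sigma_a\sigma_b=-\sigma_b\sigma_a$ for $a\neq b$, $\sigma_a^2=I$, $\sigma_x^{\mathrm T}=\sigma_x$, $\sigma_y^{\mathrm T}=-\sigma_y$, $\sigma_z^{\mathrm T}=\sigma_z$, together with the symmetry hypotheses on the $X_k$. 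Once the right conjugator is identified, the proof is two lines: write $\widetilde{L}_{(\lambda_1,-\lambda_2,\lambda_3,\lambda_4)} = S\,\widetilde{L}_{(\lambda_1,\lambda_2,\lambda_3,\lambda_4)}^{\mathrm T}\,S^{-1}$ with $S = I_n\otimes(\text{the chosen Pauli})$, and take determinants.
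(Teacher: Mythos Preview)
Your first instinct---the plain transpose---is correct, and it is exactly the paper's argument. You abandoned it because of a bookkeeping slip: you treated the entire factor $X_2-\lambda_2 I$ as anti-symmetric, but $\lambda_2 I$ is of course symmetric. Tracking the two pieces separately,
\[
\bigl((X_2-\lambda_2)\otimes i\sigma_y\bigr)^{\mathrm{T}}
=(-X_2-\lambda_2)\otimes(-i\sigma_y)
=(X_2+\lambda_2)\otimes i\sigma_y
=\bigl(X_2-(-\lambda_2)\bigr)\otimes \widetilde\gamma_2,
\]
while for $k=1,3,4$ both tensor factors are symmetric and the summand is unchanged. Hence
\[
\widetilde L_{(\lambda_1,\lambda_2,\lambda_3,\lambda_4)}^{\,\mathrm{T}}
=\widetilde L_{(\lambda_1,-\lambda_2,\lambda_3,\lambda_4)}
\]
on the nose, and taking determinants finishes the proof in one line. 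This is precisely what the paper does.

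Your fallback search for a Pauli conjugator cannot succeed as formulated. Requiring slots $1$ and $3$ to be invariant under the combined transpose-and-conjugate forces the conjugator to commute with both $\sigma_x$ and $\sigma_z$, hence to be scalar, and you are back to the plain transpose. More fundamentally, an overall sign flip on the $k=2$ summand would produce $(-X_2+\lambda_2)\otimes\widetilde\gamma_2$, not $(X_2+\lambda_2)\otimes\widetilde\gamma_2$; the whole point is that $X_2$ and $\lambda_2 I$ transform with \emph{different} signs under transpose, which you only see by handling them separately.
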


\begin{proof}
We observe that 
\begin{equation*} 
\widetilde{\gamma}_k ^\mathrm{T}  = 
\begin{cases}
      \widetilde{\gamma}_k & \mbox{if } k\neq 2 \\
    - \widetilde{\gamma}_k & \mbox{if } k=2
\end{cases}
\end{equation*}
and similarly we have the assumption
\begin{equation*} 
X_k ^\mathrm{T} = 
\begin{cases}
      X_k & \mbox{if } k\neq 2 \\
    - X_k & \mbox{if } k=2.
\end{cases}
\end{equation*}
so we get that every term $X_k \otimes \widetilde{\gamma}_k $ is symmetric.  On
the other hand, every term $\lambda_k I_n \otimes \widetilde{\gamma}_k$ is
symmetric except for $k=2$, where that term is anti-symmetric.
Let $\epsilon_j = 1$ except for  $\epsilon_2 = -1$.
Then we have
\begin{align*}
\left(\sum_{k=1}^3  (X_k - \lambda_k) \otimes \widetilde{\gamma}_k \right)^\mathrm{T}
&=
\left(\sum_{k=1}^3  X_k \otimes \widetilde{\gamma}_k \right)^\mathrm{T}
  +
  \left(\sum_{k=1}^3  \lambda_k I_k \otimes \widetilde{\gamma}_k \right)^\mathrm{T}\\
&=   \sum_{k=1}^3  (X_k - \lambda_k) \otimes \widetilde{\gamma}_k 
  + \sum_{k=1}^3  \epsilon_j \lambda_k I_k \otimes  \widetilde{\gamma}_k  \\
&= \sum_{k=1}^3  (X_k - \epsilon_j \lambda_k) \otimes \widetilde{\gamma}_k
\end{align*}
Since the transpose does not effect the determinant, the result follows.
\end{proof}

\begin{table}
\begin{tabular}{|c||c|}
\hline 
$n$ & Imaginary part of reduced characteristic polynomial\tabularnewline
\hline 
\hline 
$3$ & $\left(w^{2}+x^{2}-y^{2}-z^{2}\right)\left(\frac{3}{2}\sqrt{3}\right)$\tabularnewline
\hline 
\hline 
$4$ & $\left(w^{2}+x^{2}-y^{2}-z^{2}\right)\left(4w^{2}+4x^{2}+4y^{2}+4z^{2}+8\right)$\tabularnewline
\hline 
\hline 
$5$ & $\left(w^{2}+x^{2}-y^{2}-z^{2}\right)\left(\frac{5}{2}\sqrt{\frac{1}{2}\left(65+29\sqrt{5}\right)}+[\cdots]+\frac{5}{2}\sqrt{\frac{1}{2}\left(5+\sqrt{5}\right)}z^{4}\right)$\tabularnewline
\hline 
\hline 
$6$ & $\left(w^{2}+x^{2}-y^{2}-z^{2}\right)\left(\frac{3}{2}\sqrt{3}\left(w^{2}+x^{2}+y^{2}+z^{2}+2\right)\left([\cdots]\right)\right)$\tabularnewline
\hline 
\end{tabular}
\caption{The imaginary parts of the reduced characteristic polynomials used
in the proof of Theorem~\ref{thm:torus_in_4_space}. For the full polynomials
and how they are calculated, see the supplementary files \texttt{torus\_4\_n*.*}, in particular the variable \texttt{impoly}.
\label{tab:The-imaginary-parts}}
\end{table}

\begin{table}
\begin{tabular}{|c||c|}
\hline 
$n$ & Effective real part of reduced characteristic polynomial\tabularnewline
\hline 
\hline 
$3$ & $(-2\cos(3\phi)-2\cos(3\theta))r^{3}+8r^{6}+12r^{4}+3r^{2}-1$\tabularnewline
\hline 
\hline 
$4$ & $r^{4}(-2\cos(4\phi)-2\cos(4\theta)+20)+16r^{8}+32r^{6}-4$\tabularnewline
\hline 
\hline 
$5$ & $32r^{10}+80r^{8}+\left(65+5\sqrt{5}\right)r^{6}+(-2\cos(5\phi)-2\cos(5\theta))r^{5}+[\cdots]$\tabularnewline
\hline 
\hline 
$6$ & $64r^{12}+192r^{10}+240r^{8}+(-2\cos(6\phi)-2\cos(6\theta)+148)r^{6}+9r^{4}-54r^{2}-27$\tabularnewline
\hline 
\end{tabular}\caption{Real parts  of the reduced characteristic polynomials used
in the proof of Theorem~\ref{thm:torus_in_4_space}. See the supplementary files \texttt{torus\_4\_n*.*}, in particular the variable \texttt{altpoly}.
 \label{tab:Real-parts}}
\end{table}

\begin{table}
\begin{tabular}{|c||c|}
\hline 
$n$ & Derivatives in $r$ of the Effective real parts\tabularnewline
\hline 
\hline 
$3$ & $(-6\cos(3\phi)-6\cos(3\theta))r^{2}+48r^{5}+48r^{3}+6r$\tabularnewline
\hline 
\hline 
$4$ & $r^{3}(-8\cos(4\phi)-8\cos(4\theta)+80)+128r^{7}+192r^{5}$\tabularnewline
\hline 
\hline 
$5$ & $320r^{9}+640r^{7}+\left(390+30\sqrt{5}\right)r^{5}+(-10\cos(5\phi)-10\cos(5\theta))r^{4}+[\cdots]$\tabularnewline
\hline 
\hline 
$6$ & $768r^{11}+1920r^{9}+1920r^{7}+(-12\cos(6\phi)-12\cos(6\theta)+888)r^{5}+36r^{3}-108r$\tabularnewline
\hline 
\end{tabular}\caption{Derivatives in $r$ of the function in Table~\ref{tab:Real-parts}.
\label{tab:Real-parts-Deriv}}
\end{table}

\begin{thm}
\label{thm:torus_in_4_space}
Suppose $n$ equals $3$,  $4$,  $5$ or  $6$, and define
\begin{equation*} 
\begin{aligned}
X_1 &= \tfrac{1}{2}U_n^* + \tfrac{1}{2}U_n 
, \quad
X_2 &= \tfrac{i}{2}U_n^* - \tfrac{i}{2}U_n\\
X_3 &= \tfrac{1}{2}V_n^* + \tfrac{1}{2}V_n 
, \quad
X_4 &=\tfrac{i}{2}V_n^* - \tfrac{i}{2}V_n\\
\end{aligned}
\end{equation*} 
where $U_n$ and $V_n$ are the clock and shift unitaries as in {\rm(\ref{eqn:define_U})}  and {\rm (\ref{eqn:define_V})}.  Then the Clifford spectrum of $(X_1,X_2,X_3,X_4)$ is
homeomorphic to a two-torus.
\end{thm}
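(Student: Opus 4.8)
For each of the four values of $n$ the plan is to reduce the statement to the study of a single explicit polynomial---the reduced characteristic polynomial $\det(\widetilde{L}_{\boldsymbol\lambda})$---and then to identify its zero set with the graph of a positive continuous function over a $2$-torus. The first step is to exploit the complex-variable structure: since $X_1 + iX_2 = U_n$ and $X_3 + iX_4 = V_n$, setting $\mu = \lambda_1 + i\lambda_2$ and $\nu = \lambda_3 + i\lambda_4$ one computes that $\widetilde{L}_{\boldsymbol\lambda}$ equals $i$ times the $2n$-by-$2n$ matrix $\bigl(\begin{smallmatrix} V_n^* - \bar\nu & U_n^* - \bar\mu \\ U_n - \mu & -(V_n - \nu)\end{smallmatrix}\bigr)$. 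Its determinant is a complex polynomial $p_n = A_n + iB_n$ in $\lambda_1,\dots,\lambda_4$, and $\boldsymbol\lambda$ lies in the Clifford spectrum exactly when $A_n = 0$ and $B_n = 0$. Because $U_n^n = V_n^n = I$, the polynomial $p_n$ has the rigid shape recorded in Tables~\ref{tab:The-imaginary-parts} and \ref{tab:Real-parts}---in particular the imaginary part factors as $B_n = (w^2 + x^2 - y^2 - z^2)\,Q_n$---and Lemma~\ref{lem:poly_symmetry_from_symmetry} supplies the $\lambda_2 \mapsto -\lambda_2$ symmetry consistent with that factorization.

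Next I would verify that $Q_n$ is strictly positive on a ball containing the compact Clifford spectrum: this is trivial for $n=3,4$, where $Q_n$ is everywhere positive (a positive constant, respectively $4(w^2+x^2+y^2+z^2)+8$), and for $n=5,6$ it follows by inspecting the explicit $Q_n$ as a positive constant plus a sum of monomials with nonnegative coefficients. Hence every spectral point lies on the cone $w^2+x^2 = y^2+z^2$. On that cone introduce coordinates $(r,\phi,\theta)$, $r\ge 0$, determined by $w+ix = re^{i\phi}$, $y+iz = re^{i\theta}$; then $B_n$ vanishes identically and substituting into $A_n$ produces exactly the ``effective real part'' $P_n(r,\phi,\theta)$ of Table~\ref{tab:Real-parts}, a degree-$2n$ polynomial in $r$ whose entire dependence on $(\phi,\theta)$ sits in the single term $-2(\cos n\phi + \cos n\theta)\,r^n$. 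Thus the Clifford spectrum is precisely the set of cone points with $P_n(r,\phi,\theta)=0$.

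The analytic core, to be done case by case for $n=3,4,5,6$, is to show that for every $(\phi,\theta)$ the polynomial $r\mapsto P_n(r,\phi,\theta)$ has a \emph{unique} zero $\rho_n(\phi,\theta)$ in $[0,\infty)$, and that it is simple and strictly positive. Existence of a zero in $(0,\infty)$ is immediate: $P_n(0,\phi,\theta)$ is the negative constant visible in Table~\ref{tab:Real-parts} ($-1,-4,\dots,-27$), so the origin is not spectral, while the leading coefficient is positive. For uniqueness I would establish the transversality statement that $\partial_r P_n(r,\phi,\theta) > 0$ at every zero of $P_n$ with $r>0$: then the smallest zero is an up-crossing and a second zero would have to be a down-crossing, which is impossible. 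To check transversality one uses the derivative polynomials of Table~\ref{tab:Real-parts-Deriv}, replacing $\cos n\phi + \cos n\theta$ by its extreme values $\pm 2$ to obtain one-variable polynomial inequalities; wherever that crude bound fails to force $\partial_r P_n > 0$ (which can occur for $n=5,6$ at small $r$) one verifies instead, by the same kind of estimate applied to $P_n$ itself, that $P_n$ is strictly negative there, so no zero is overlooked. Granting this, the implicit function theorem makes $\rho_n$ real-analytic and $2\pi$-periodic in each variable, hence a positive continuous function on $T^2 = (\mathbb R/2\pi\mathbb Z)^2$, and $(\phi,\theta)\mapsto \rho_n(\phi,\theta)\,(\cos\phi,\sin\phi,\cos\theta,\sin\theta)$ is a continuous injection of the compact torus onto the Clifford spectrum, therefore a homeomorphism.

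I expect the transversality/uniqueness step for $n=5$ and $n=6$ to be the real obstacle: the polynomials $P_n$ and $\partial_r P_n$ are long, the $\pm 2$ bound alone does not yield global monotonicity in $r$, and cleanly ruling out spurious zeros requires combining the derivative estimate with sign information on $P_n$---most safely via a Sturm-sequence or cylindrical-algebraic-decomposition certificate produced by the computer algebra system. Checking $Q_n>0$ for $n=5,6$ is a milder version of the same nuisance; everything else in the argument is either formal or a one-line inequality.
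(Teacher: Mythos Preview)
Your proposal is correct and follows essentially the same route as the paper: factor the imaginary part of the reduced characteristic polynomial to reduce to the cone $w^{2}+x^{2}=y^{2}+z^{2}$, pass to polar coordinates $(r,\phi,\theta)$, and then show case by case for $n=3,4,5,6$ that the resulting polynomial in $r$ has a unique positive root depending continuously on $(\phi,\theta)$. Your transversality-at-zeros formulation is equivalent to the paper's monotonicity and sign-splitting estimates (global $\partial_{r}f>0$ for $n=3,4$; negativity on a small-$r$ interval combined with $\partial_{r}f>0$ beyond it for $n=5,6$), and you are in fact slightly more careful than the paper in flagging that the cofactor $Q_{n}$ of $w^{2}+x^{2}-y^{2}-z^{2}$ must be shown nonvanishing on the relevant region.
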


\begin{proof}
We would like to solve for where the reduced localizer is zero,
\begin{equation}
\det\left( \widetilde{L}_{\boldsymbol{\lambda}}(X_1,X_2,X_3,X_4)  \right) =0.
\end{equation} 

We will do that in the following way. First, we will find the condition for the imaginary part of the localizer to be zero. Then, after setting its imaginary part to zero, we will show that the real part has both positive and negative values, which implies that it crosses zero at some point. Therefore, at the latter point both real and imaginary parts are zero, which means the whole thing is zero. 

We let used computer algebra to calculate and simplify the reduced characteristic
polynomial, with results as shown in Table~\ref{tab:The-imaginary-parts}.
In all cases, the condition $\Im \det \widetilde{L}_{(w,x,y,z)}= 0$ becomes 
\begin{equation}
 w^2+x^2=y^2+z^2.
 \label{eqn:one_radius}
\end{equation} 
We now apply Lemma~\ref{lem:poly_symmetry_from_symmetry} and deduce we have $(w,x,y,z)$ in the Clifford spectrum if,  and only if, $(w,-x,y,z)$ is the Clifford spectrum.  Thus we are justified in assuming $x \geq 0$.  With this assumption, the condition $\Im \det \widetilde{L}_{(w,x,y,z)}= 0$ becomes
\begin{equation*}
 x = \sqrt{-w^2 + y^2+z^2}.
\end{equation*} 
This means we can eliminate $x$ in the polynomial $\Re \det \widetilde{L}_{(w,x,y,z)}$ via the substitution 
\begin{equation*}x \mapsto \sqrt{-w^2 + y^2+z^2} .
\end{equation*} 
With this substitution, we get a somewhat more reasonable polynomial.  In the
case of $n=3$ it is
\begin{equation*}
\begin{gathered}
-8 w^3+3 z^2 \left(2 w+8 y \left(y^3+y\right)+2 y+1\right) \\
+6 w y^2+8 y^6+12 y^4-2 y^3 \\
+12 \left(2 y^2+1\right) z^4+3 y^2+8z^6-1
\end{gathered}
 \end{equation*}   
and for $n=4,5,6$ this polynomial has too many terms to easily display.  It
but can be seen as  \texttt{realpoly} in the supplementary files
\texttt{torus\_4\_n*.*}.

Inspired by (\ref{eqn:one_radius}) we switch to polar coordinates in the first two and also the last two variables, as we know the radius will be the same.  That is, we
make the substitution
\begin{equation} 
\begin{aligned}
w &= r \cos \theta, 
\quad
x &= r \sin \theta \\
y &= r \cos \phi ,
\quad
z &= r \sin \phi \\
\end{aligned}
\label{eqn:polar_coord}
\end{equation} 
and find the computer does a much better job simplifying.
The Clifford spectrum will be the zero set of the functions
shown in Table~\ref{tab:Real-parts},
interpreted via (\ref{eqn:polar_coord}).  
The function in the $n=5$ case was too long for the table, but can be
seen as  \texttt{altpoly} in the supplementary files
\texttt{torus\_4\_n5.*}.

Now we finish the proof for the case $n=4$, which is the easiest case.
Let's denote the relevant function from
Table~\ref{tab:Real-parts} by $f(r, \theta, \phi)$, so 
\begin{equation*} 
f(r, \theta, \phi) = -4+32r^6+16r^8+(20-2 \cos (4 \phi) - 2 \cos (4 \theta))r^4  \end{equation*}
and its $r$ derivative is 
\begin{equation*} 
\frac{\partial f}{\partial r}
= 192 r^5 +128 r^7 + (80 - 8 \cos (4 \phi) - 8 \cos (4 \theta)) r^3 .
\end{equation*} 
Since sine and cosine are bounded by $\pm 1$ we see that, for any
angles $\phi$ and $\theta$,
$\frac{\partial f}{\partial r} > 0 $
for all $r>0$ and so $f(r, \theta, \phi)$ is increasing for $r\geq 0$.
By observing that 
\begin{equation*} 
f (\theta, \phi, 0) = -4 
\end{equation*} 
and
\begin{equation*} 
\lim_{r \rightarrow \infty} f (\theta, \phi, r) = \infty 
\end{equation*}
we know that, for any fixed $(\theta, \phi)$, there exist at least one value of $r$ for which $f (\theta, \phi,r) =0$, and the fact that $\partial f/ \partial r>0$ implies that this value of $r$ is unique.
Call this value $\rho(\theta, \phi)$, so
\begin{equation*} 
f(\theta, \phi,  \rho(\theta, \phi)) = 0 
\end{equation*} 
Thus, the surface we are looking for is precisely the surface $r = \rho(\theta, \phi)$, which is
indeed topologically equivalent to a torus since $\rho(\theta, \phi)$ must
vary continuously in $\theta$ and $\phi$ since the roots of a polynomial
vary continuous with respect to the coefficients \cite{harrisRootAreContinuous}.
The resulting surface in illustrated in Figure~\ref{fig:torus_R4_n3}.

Now we look at the case $n=3$.
The relevant function from
Table~\ref{tab:Real-parts} is 
\begin{equation*} 
f(r, \theta, \phi) =  (-2\cos(3\phi)-2\cos(3\theta))r^{3}+8r^{6}+12r^{4}+3r^{2}-1
\end{equation*}
with derivative in $r$ being
\begin{equation*} 
\frac{\partial f}{\partial r} = (-6\cos(3\phi)-6\cos(3\theta))r^{2}+48r^{5}+48r^{3}+6r
\end{equation*}
For $0 < r \leq \tfrac{1}{2} $ we have the estimate
\begin{align*}
\frac{\partial f}{\partial r} & > (-6\cos(3\phi)-6\cos(3\theta))r^{2} + 6r \\
  & \geq  (-12r + 6)r  \geq 0
\end{align*}
and for $\tfrac{1}{2} \leq r \leq 1 $ we have the estimate
\begin{align*}
\frac{\partial f}{\partial r} & >  (-6\cos(3\phi)-6\cos(3\theta))r^{2}+48r^{3} \\
 & \geq (-12 +48r)r^{2}   \geq 0
\end{align*}
so again the derivative is positive except at zero it is zero.  The rest of the
proof follows as in the case $n=4$.
The resulting surface in illustrated in Figure~\ref{fig:torus_R4_n4}.

For the case $n=5$ one can prove that for $0\leq r\leq \tfrac{3}{5}$,
\begin{equation*} 
f(r, \theta, \phi) \leq -2
\end{equation*}
and,  for $\tfrac{3}{5} \leq r \leq 1 $,
\begin{equation*} 
\frac{\partial f}{\partial r}  \geq  33
\end{equation*}
so again we see that for each pair of angles there is only one radius
to make this function zero.
The work to create these two estimates is shown in the 
supplementary files
\texttt{torus\_4\_n5.*}.

For the case $n=6$ one can prove that for $0\leq r\leq \tfrac{3}{5}$,
\begin{equation*} 
f(r, \theta, \phi) \leq   -20
\end{equation*}
and,  for $\tfrac{3}{5} \leq r \leq 1 $,
\begin{equation*} 
\frac{\partial f}{\partial r}  \geq  42
\end{equation*}
so again we see that for each pair of angles there is only one radius
to make this function zero.
The work to create these two estimates is shown in the 
supplementary files
\texttt{torus\_4\_n6.*}.

\end{proof}

\begin{figure}
\includegraphics{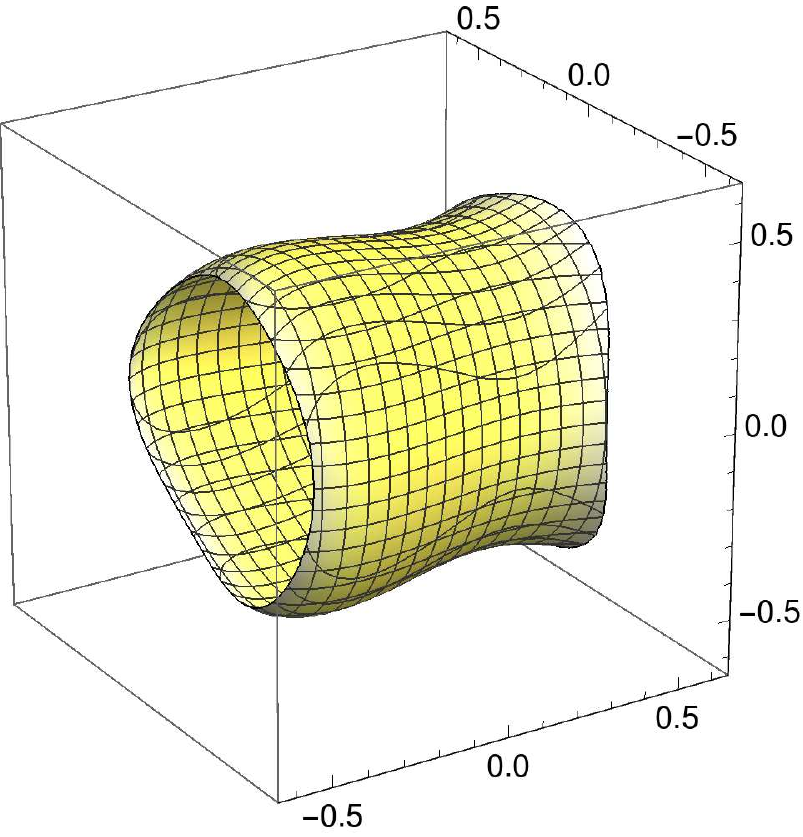}
\includegraphics{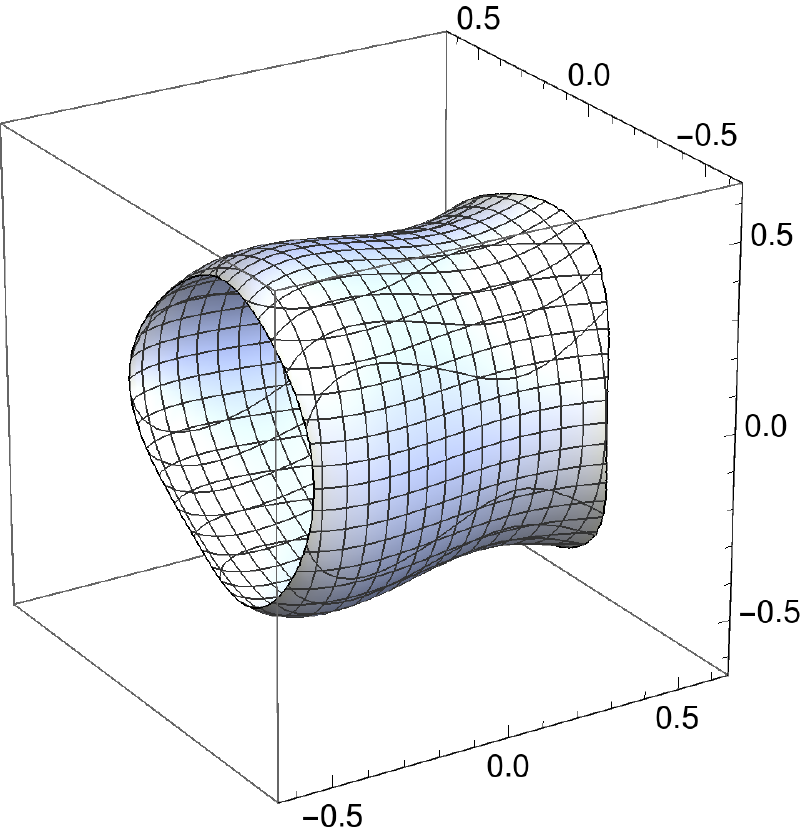}
\caption{The Clifford spectrum as a surface in four space. The top and bottom
represent half the surface, with color indicating the value in the
fourth dimension \textemdash{} white indicates zero, shades of yellow
indicate positive values, and shades of blue negative values. This
is for the for Hermitian matrices extracted from the clock and shift
matrices, with $n=3$. 
\label{fig:torus_R4_n3}}
\end{figure}

\begin{figure}
\includegraphics{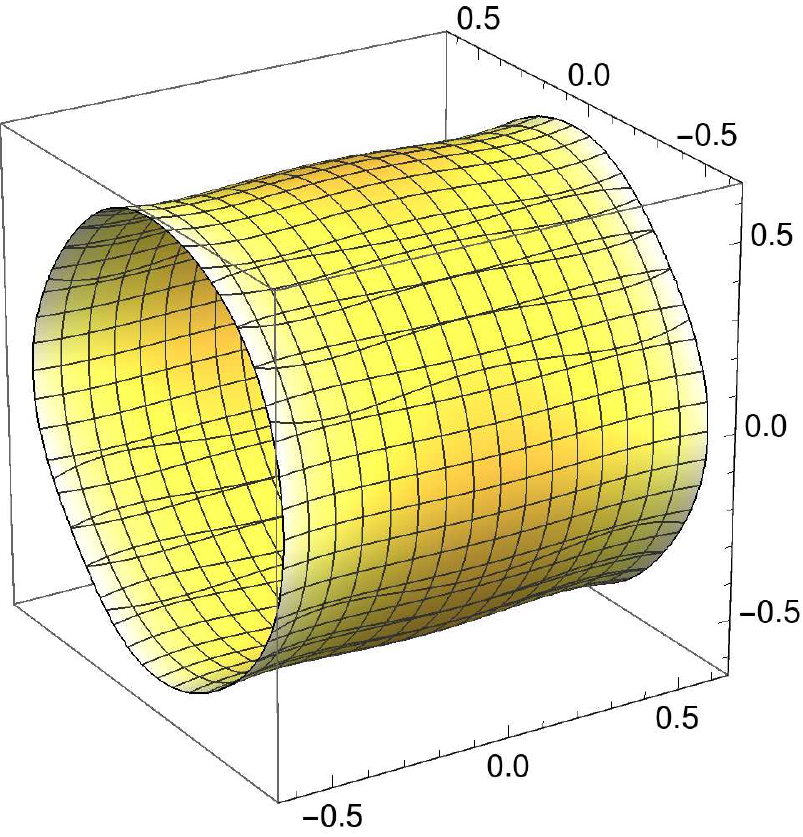}
\includegraphics{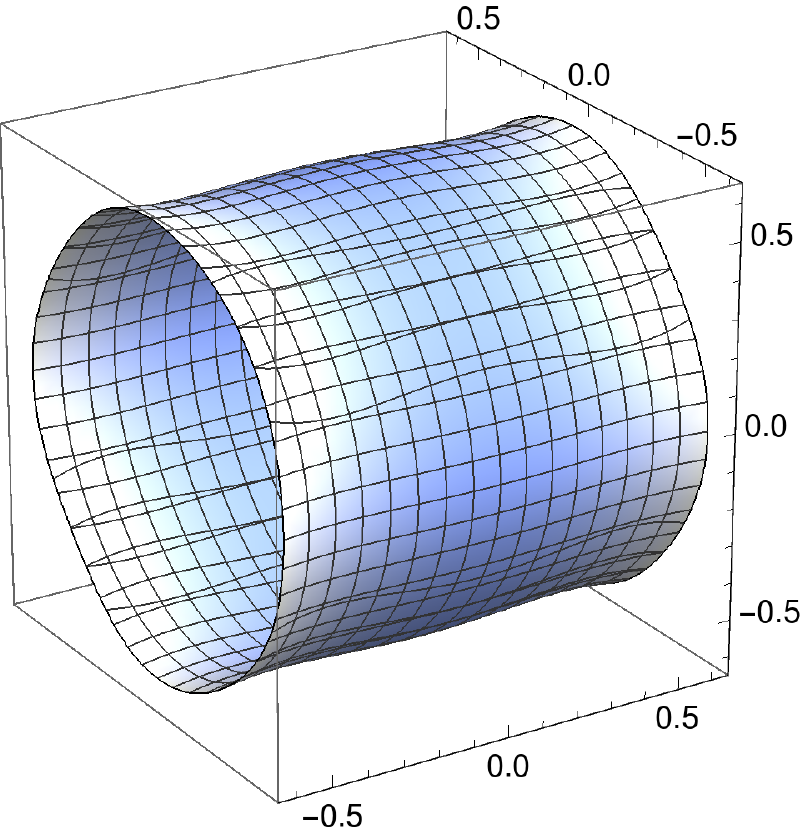}
\caption{The Clifford spectrum as a surface in four space, for the Hermitian matrices extracted from the clock and shift matrices,
with $n=4$. 
\label{fig:torus_R4_n4}}
\end{figure}

\begin{example}
In example~\ref{exa:Pauli_sphere} we saw that the Clifford spectrum of the gamma matrices lead to a sphere.  Taking the Clifford spectrum of the four gamma matrices (\ref{eqn:four_gammas}) gives a somewhat different answer.
In the supplementary file \texttt{GammaMatrices\_4B.*} is are the symbolic calculations
that for these four matrices the reduced characteristic polynomial is
\begin{equation*} 
\left(w^2+x^2+y^2+z^2\right)^3 \left(w^2+x^2+y^2+z^2+8\right)
\end{equation*}
and so the Clifford spectrum is a single point.
\end{example}

\begin{figure}
\includegraphics{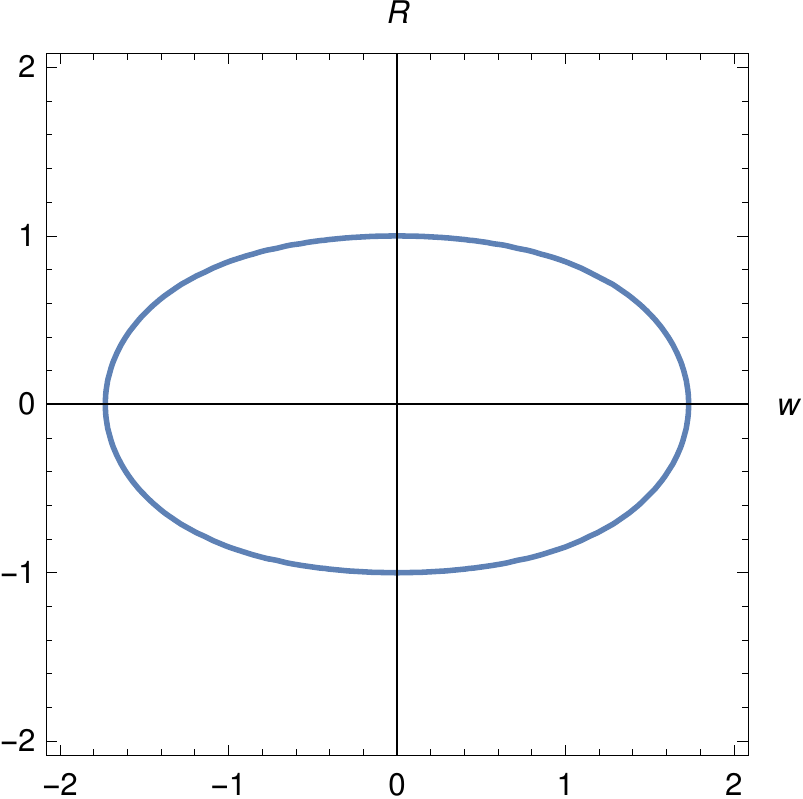}
\caption{The Clifford spectrum in Example~\ref{exa:rescaledGamma} is
this curve rotated in the two additional dimensions.
\label{fig:almostThreeSphere}}
\end{figure}

\begin{example}
\label{exa:rescaledGamma}
Now we look at a rescaling of the four gamma matrices (\ref{eqn:four_gammas}),
\begin{equation*} 
X_1 = 2 \gamma_1 ,\quad
X_2 = \gamma_2 ,\quad
X_3 = \gamma_3 ,\quad
X_4 = \gamma_4 .
\end{equation*}
and find, in supplementary file \texttt{GammaMatrices\_4A.*}, that the 
reduced characteristic polynomial is
\begin{equation*} 
(9 + 6 R^2 + R^4 - 6 w^2 + 2 R^2 w^2 + w^4) (-15 + 14 R^2 + R^4 + 
   2 w^2 + 2 R^2 w^2 + w^4)
\end{equation*}
where $R = \sqrt{x^2 + y^2 + z^2}$.  For this example, the Clifford spectrum
is homeomorphic to the three-sphere.  See Figure~\ref{exa:rescaledGamma}.
\end{example}

\section{Symmetry classes and $K$-theory charges}
\label{SymmetryClasses}

\subsection{Where the index and plotting fail}

\begin{figure}
\includegraphics{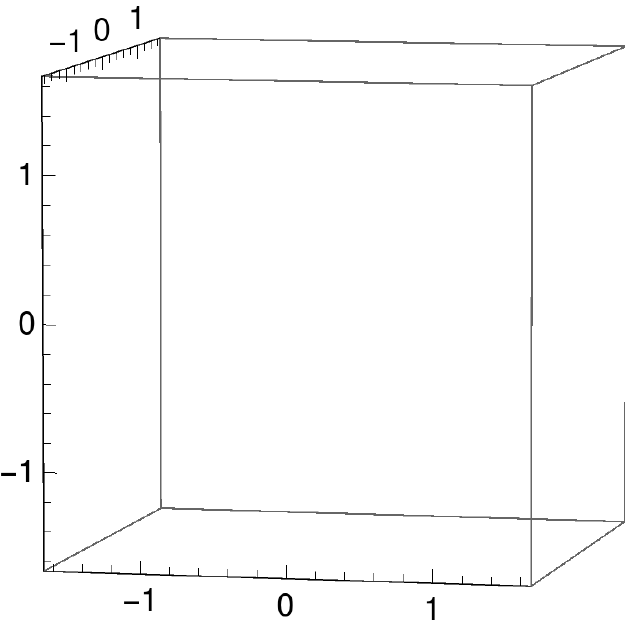}
\includegraphics{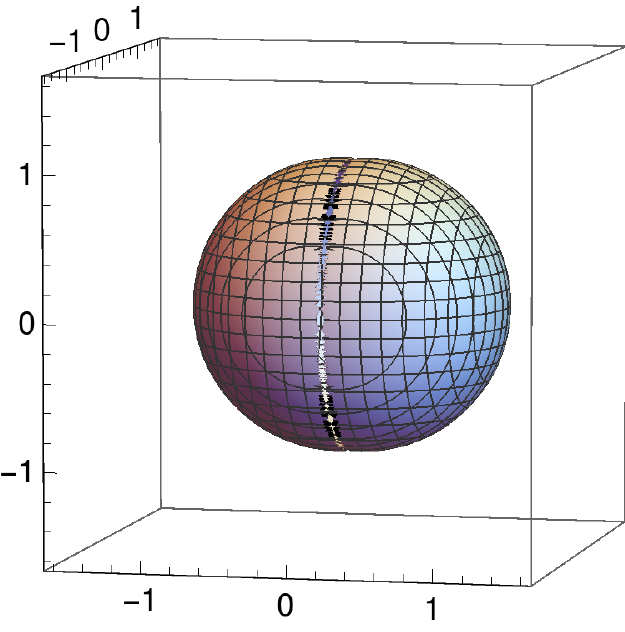}\\
\includegraphics{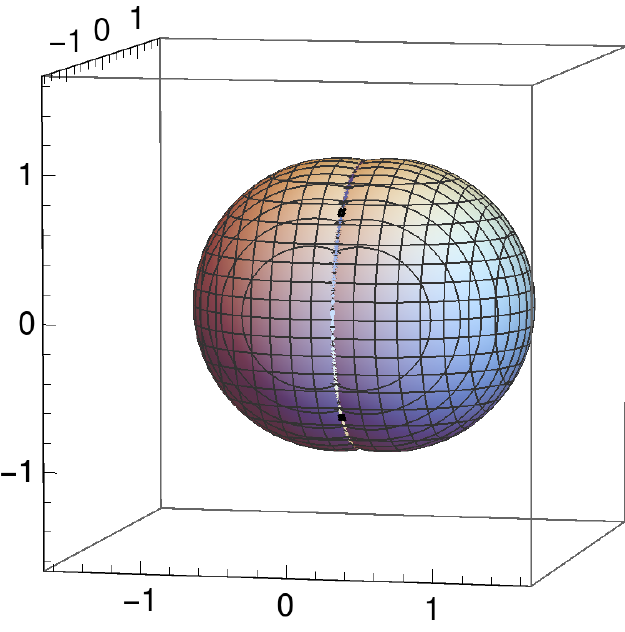}
\includegraphics{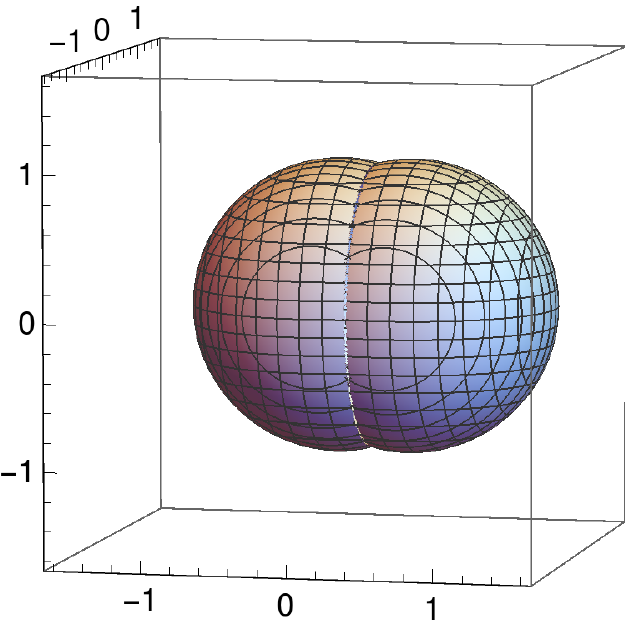}\\
\caption{An example where we cannot trust the plot via the characteristic
polynomial. This is using matrices as in Example~\ref{exa:bad_plot_example},
with $r=0$ at the top left, increasing by $1/6$ and ending at the
bottom right with $r=1/2$.
The code to create these graphics are in the
in the supplementary files \texttt{ClassAIIsphere.*}. 
\label{fig:Two_spheres_bad_plotting}} 
\end{figure}

We have the index to give us critical information about the surfaces
we have plotted. Sometimes the Clifford spectrum is a surface
but the index is zero everywhere it is defined.  Moreover, in those situations
the computer plotting can fail.

\begin{example}
\label{exa:bad_plot_example}
The three matrices we consider are as follows:
\begin{equation}
\label{eq:null_plot_example}
X=\begin{bmatrix}
0 & 1 & 0 & 0\\
1 & 0 & 0 & 0\\
0 & 0 & 0 & 1\\
0 & 0 & 1 & 0
\end{bmatrix},\,Y=\begin{bmatrix}
0 & -i & 0 & 0\\
i & 0 & 0 & 0\\
0 & 0 & 0 & i\\
0 & 0 & -i & 0
\end{bmatrix},\,Z=\begin{bmatrix}
1 & 0 & 0 & 0\\
0 & -1 & 0 & 0\\
0 & 0 & 1 & 0\\
0 & 0 & 0 & -1
\end{bmatrix} .
\end{equation}
Since the characteristic polynomial respects direct sums, it is easy to see
from Example \ref{exa:Pauli_sphere} that the characteristic polynomial
is
\begin{equation*}
\text{char} \left(\sigma_x, \sigma_y, \sigma_z \right) 
 = (x^2 + y^2 + z^2 - 1)^2(x^2 + y^2 + z^2 +3)^2
\end{equation*}
so the Clifford spectrum is the unit sphere.  Also, by looking at the
direct sum structure, one can check that the index zero at the origin.
Thus the index is zero everywhere it is defined.
Figure~\ref{fig:Two_spheres_bad_plotting}
looks at the plot Mathematica makes using the characteristic polynomial
for 
\begin{equation}
X_{r}=\begin{bmatrix}
0 & 1 & 0 & 0\\
1 & 0 & 0 & 0\\
0 & 0 & r & 1\\
0 & 0 & 1 & r
\end{bmatrix},\,Y_{r}=\begin{bmatrix}
0 & -i & 0 & 0\\
i & 0 & 0 & 0\\
0 & 0 & 0 & i\\
0 & 0 & -i & 0
\end{bmatrix},\,Z_{r}=\begin{bmatrix}
1 & 0 & 0 & 0\\
0 & -1 & 0 & 0\\
0 & 0 & 1 & 0\\
0 & 0 & 0 & -1
\end{bmatrix}\label{eq:bad_plot_matrices}
\end{equation}
for various small values of $r$, and also at zero. At zero the output
is the null plot, which is wrong.
\end{example}

\subsection{A refined index in the case of self-dual symmetry}

\begin{figure}
\includegraphics{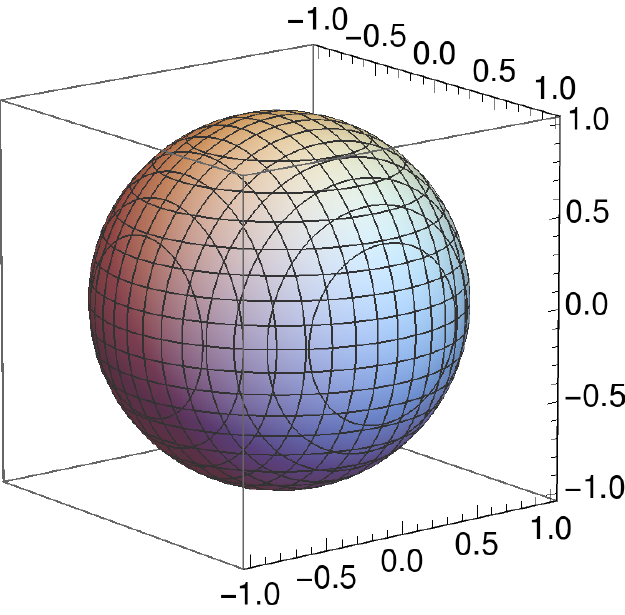}
\includegraphics{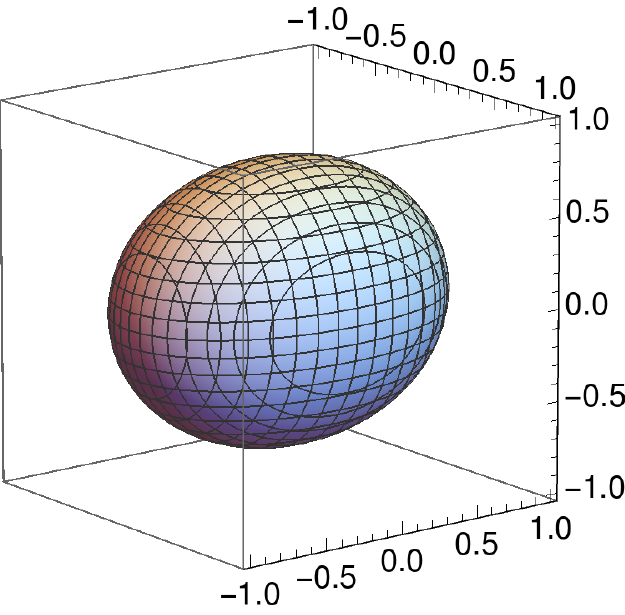}\\
\includegraphics{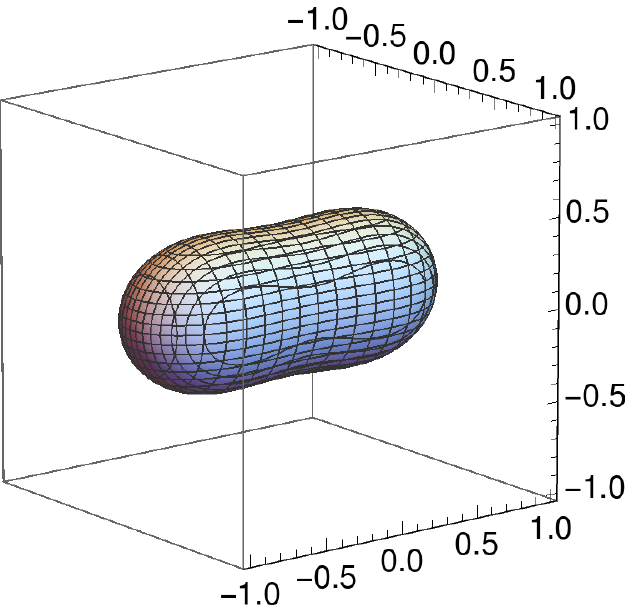}
\includegraphics{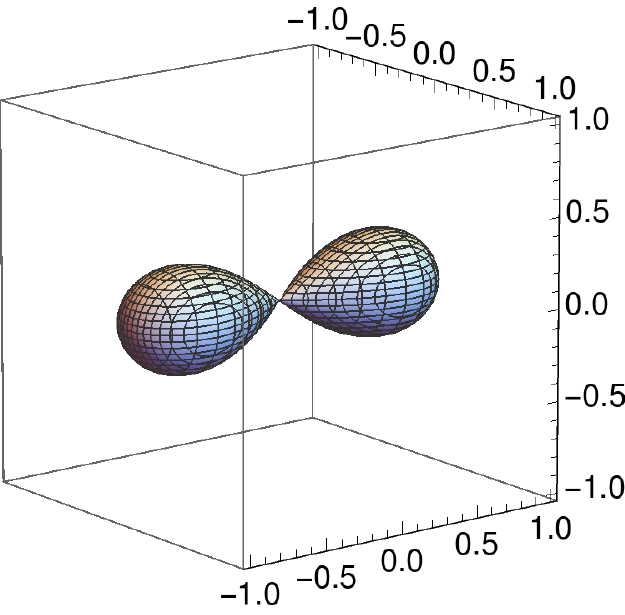}\\
\caption{The self dual matrices from Example~\ref{exa:path_of_self_dual}, plotted using the archetypal polynomial. This is using
matrices of Equation~\ref{eq:SelfDualPath}, with $s=0$ at the
top left, increasing by $1/6$ and ending at the bottom right
with $s=1/2$. Plots made using the
supplemntary file \texttt{ClassAIIspherePfaff.nb}.
\label{fig:Sphere_to_lemniscate_Pfaffian}}.
\end{figure}

In the case of the matrices in Equation~\ref{eq:null_plot_example},
the matrices had an extra symmetry that went unused. They are all self-dual, a mathematical
interpretation of having fermionic time reversal symmetry. 

Recall that the dual operation is defined as, 
\begin{equation*}
X^{\#} = {\begin{bmatrix} A & B \\ C & D \end{bmatrix}}^{\#} = \begin{bmatrix} D^T & -B^T \\ -C^T & A^T \end{bmatrix},
\end{equation*}
where $A, B, C,$ and $D$ are square complex matrices.  When a matrix $X$ is self-dual and Hermitian, we have both, $X^{\#}=X$ and $X^*=X.$

If we have three matrices that are Hermitian and self-dual, we find that the
localizer has an extra symmetry.  In this case, there is a matrix $Q$ that conjugates the spectral localizer nicely, given by
\begin{equation*}
Q = \begin{bmatrix} I_{2n} & -iZ_{2n} \\ iZ_{2n} & I_{2n} \end{bmatrix}
\end{equation*}
where
\begin{equation*}
Z_{2n} =  \begin{bmatrix} 0 & I_{n} \\ -I_{n} & 0 \end{bmatrix}.
\end{equation*}
Conjugating the spectral localizer, by the unitary matrix $\tfrac{1}{\sqrt{2}}Q$ we keep the determinant unchanged.  That is,
\begin{equation*}
\Big( \tfrac{1}{\sqrt{2}}Q \Big)^* L_{\boldsymbol{\lambda}} ( A, B, C) \Big( \tfrac{1}{\sqrt{2}}Q \Big) = \tfrac{1}{2}Q^*L_{\boldsymbol{\lambda}} ( A, B, C)Q
\end{equation*}
and 
\begin{equation*}
\text{det} \left( \tfrac{1}{2}Q^*L_{\boldsymbol{\lambda}} ( A, B, C)Q \right) = \text{det}( L_{\boldsymbol{\lambda}}(A,B,C))=\text{char}_{\boldsymbol{\lambda}}(A,B,C).
\end{equation*}
Using Lemma $8.1$ of Factorization of Matrices of Quaternions \cite{loring2012factorization} we confirm that the conjugation produces a skew-symmetric representation of the localizer and therefore,
\begin{equation*}
\left( \tfrac{1}{2}Q^*L_{\boldsymbol{\lambda}} ( A, B, C)Q \right)^\mathrm{T} = -\tfrac{1}{2}Q^*L_{\boldsymbol{\lambda}} ( A, B, C)Q 
\end{equation*}
We can now use the pfaffian instead of the determinant to 
detect where the localizer is singular. 
\begin{defn}
The \textit{archetypal polynomial} of a self-dual Hermitian triple $( X, Y, Z)$ is defined as
 \begin{equation*}
\textnormal{arch}_{\boldsymbol{\lambda}}(X, Y, Z) = \textnormal{Pf}\left( \tfrac{1}{2}Q^*L_{\boldsymbol{\lambda}} (X, Y, Z)Q \right).
\end{equation*}
\end{defn}

\begin{example}
\label{exa:path_of_self_dual}
We look at a different path that starts with 
the troublesome matrices of (\ref{eq:null_plot_example}).  For  $0\leq s\leq\tfrac{1}{2}$ we define matrices
\begin{equation}
\begin{aligned}
X_{s}= & \begin{bmatrix}
0 & 1-2s & 0 & s\\
1-2s & 0 & -s & 0\\
0 & s & 0 & 1-2s\\
-s & 0 & 1-2s & 0
\end{bmatrix}
,\quad
Y_{s}=  \begin{bmatrix}
0 & -i & 0 & 0\\
i & 0 & 0 & 0\\
0 & 0 & 0 & i\\
0 & 0 & -i & 0
\end{bmatrix},                  \\
Z_{s}= &\begin{bmatrix}
1-s & 0 & 0 & 0\\
0 & -1+s & 0 & 0\\
0 & 0 & 1-s & 0\\
0 & 0 & 0 & -1+s
\end{bmatrix} 
\end{aligned}
\label{eq:SelfDualPath}
\end{equation}
which are self-dual and Hermitian.
Here the plotting looks a lot better, shown in Figure~\ref{fig:Sphere_to_lemniscate_Pfaffian}.
Also, we can calculate a $\mathbb{Z}_{2}$ invariant, the sign of
the archetypal polynomial. Again, this is known to be trivial ($+1$)
far from the origin, and so a value of $-1$ of the invariant disallows
finite cardinality of the Clifford spectrum.
\end{example}

\subsection{An index for even and odd matrices}

Moving up a dimension, consider
\begin{equation}
\label{eqn:even_odd}
\begin{aligned}
X & =\begin{bmatrix}
0 & 2 & 0 & 0\\
2 & 0 & 0 & 0\\
0 & 0 & 0 & -2\\
0 & 0 & -2 & 0
\end{bmatrix}
, \quad
 Y =\begin{bmatrix}
0 & -i & 0 & 0\\
i & 0 & 0 & 0\\
0 & 0 & 0 & -i\\
0 & 0 & -i & 0
\end{bmatrix},\\
Z & =\begin{bmatrix}
1 & 0 & 0 & 0\\
0 & -1 & 0 & 0\\
0 & 0 & -1 & 0\\
0 & 0 & 0 & 1
\end{bmatrix}
, \quad
H =\begin{bmatrix}
0 & 0 & 1 & 0\\
0 & 0 & 0 & 1\\
1 & 0 & 0 & 0\\
0 & 1 & 0 & 0
\end{bmatrix}.
\end{aligned}
\end{equation}
The characteristic polynomial of these four matrices, computed by
the code in the supplementary file \texttt{Even\_odd\_4CMathematica.nb}, 
is
\[
\left(R^{4}+2R^{2}w^{2}+6R^{2}+w^{4}-6w^{2}+9\right)\left(R^{4}+2R^{2}w^{2}+14R^{2}+w^{4}+2w^{2}-15\right)
\]
where
$ R^{2}=x^{2}+y^{2}+z^{2}$.  
Again we have a surface homeomorphic to a three-sphere.

We introduce a grading via the matrix
\begin{equation*}
\Gamma=\begin{bmatrix}
1 & 0 & 0 & 0\\
0 & 1 & 0 & 0\\
0 & 0 & -1 & 0\\
0 & 0 & 0 & -1
\end{bmatrix}
\end{equation*}
so we consider a matrix $M$ \emph{even} if $M\Gamma=\Gamma M$ and
\emph{odd} if $M\Gamma=-\Gamma M$.
In the example under discussion, the first three matrices are
even and the last is odd.

With these symmetries, we get an index for points
$(w,x,y,z)$ not in the Clifford spectrum \emph{and with the restriction
that $z=0$}. This restriction is needed as translating $H$ will
ruin the symmetry $H\Gamma=\Gamma H$.  The index is based on the fact
that
\begin{equation*}
i\widetilde{L}_{\boldsymbol{\lambda}}(X,Y,Z,Y)\left(\Gamma\otimes I_{2}\right)
\end{equation*}
is Hermitian, and the index is
\begin{equation*}
\frac{1}{2}\mathrm{Sig}\left(i\widetilde{L}_{\boldsymbol{\lambda}}(X,Y,Z,Y)\left(\Gamma\otimes I_{2}\right)\right).
\end{equation*}
Here we are referring the the reduced localizer of (\ref{eqn:reduced_localizer}).
This is explained in \cite{LoringSB_odd_dim}.

\begin{figure}
\includegraphics{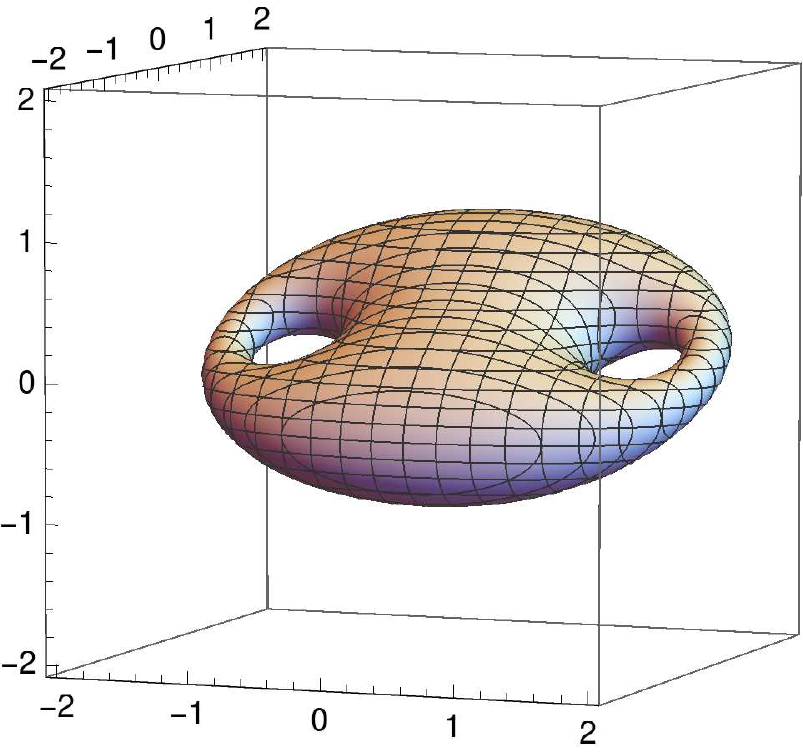}
\includegraphics{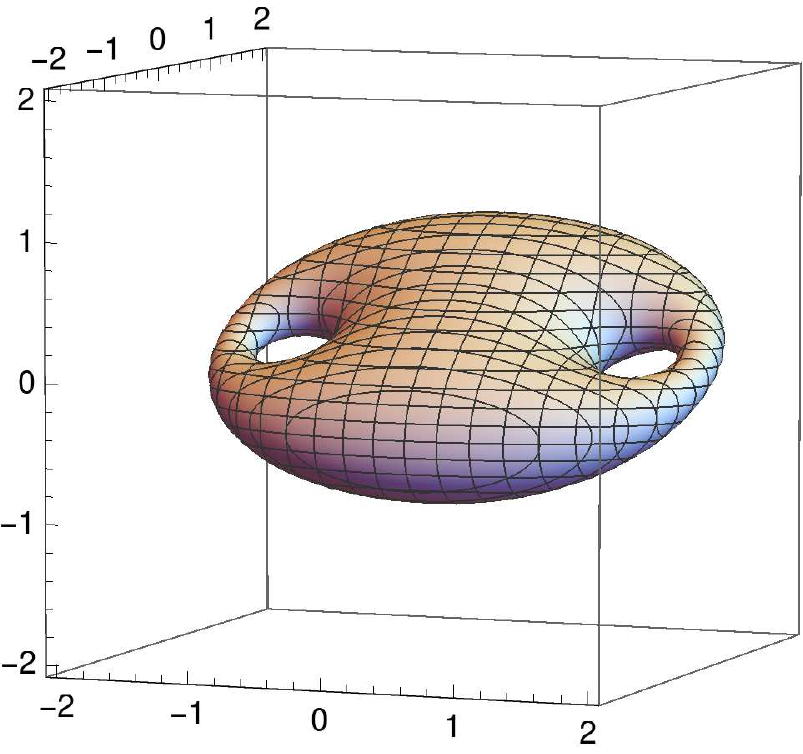}\\
\includegraphics{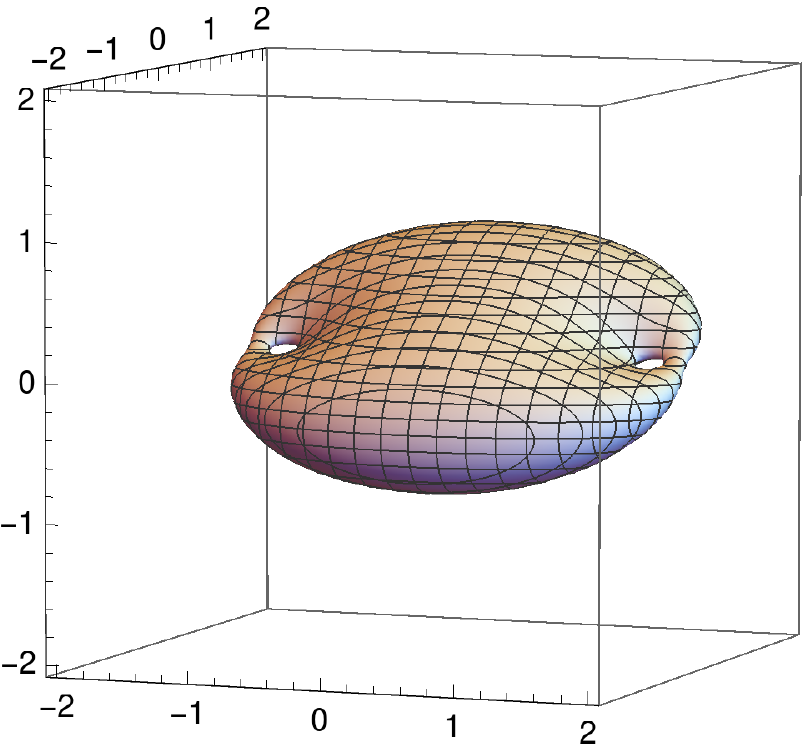}
\includegraphics{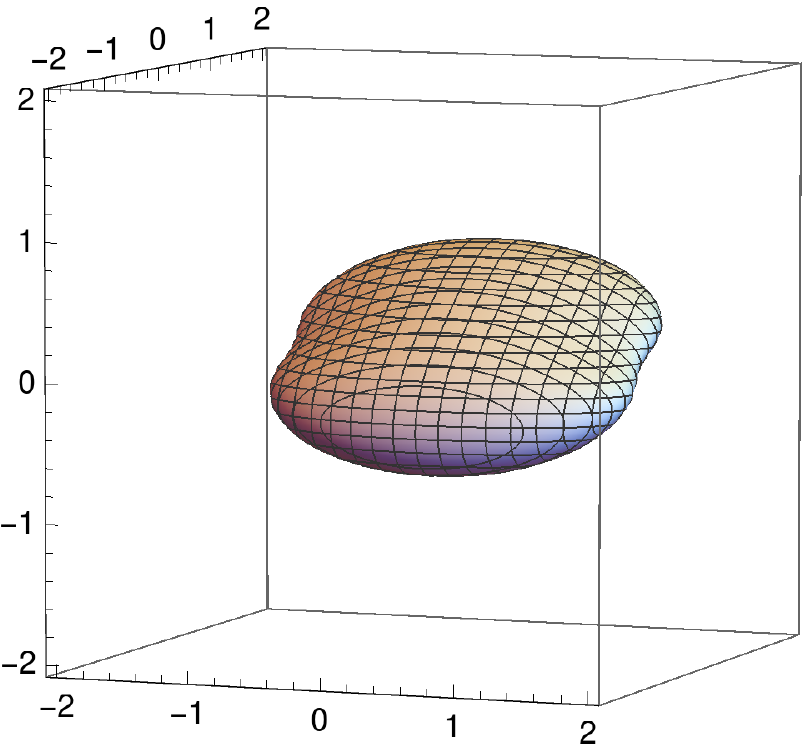}
\caption{This figure shows parts of the Clifford spectrum for 
the four matrices in (\ref{eq:deformed_3_sphere_matrices}).  Shown are slices of the Clifford spectrum in 4-space
through the hyperplances
$z=0,$ $z=0.2$, $z=0.4$ and $z=0.6$. There is a $\pm z$ symmetry in this example so
these images are valid for the corresponding negative values of $z$.
This plots we created using the file \texttt{Even\_odd\_4A.nb}.
\label{fig:Holes_in_hypersurface}}
\end{figure}

For the matrices in (\ref{eqn:even_odd}),
the index at the origin is $-1$.  As always for lambda large compared to the
norm of the matrices the index is $0$. Thus the part of the Clifford spectrum that intersects the hyperplane
$z=0$ is protected. Small perturbations of the matrices will not
change by much the part of the Clifford spectrum intersected with $z=0$.

A little exploration of matrices near these lead
to the following.  Consider the four matrices 
\begin{equation}
 \label{eq:deformed_3_sphere_matrices}
\begin{aligned}
X & =\begin{bmatrix}
\tfrac{3}{2} & 2 & 0 & 0\\
2 & 0 & 0 & 0\\
0 & 0 & 0 & -2\\
0 & 0 & -2 & \tfrac{3}{2}
\end{bmatrix}
,\quad 
Y =\begin{bmatrix}
0 & -i & 0 & 0\\
i & 0 & 0 & 0\\
0 & 0 & 0 & -i\\
0 & 0 & -i & 0
\end{bmatrix},                     \\
Z & =\begin{bmatrix}
1 & 0 & 0 & 0\\
0 & -1 & 0 & 0\\
0 & 0 & -1 & 0\\
0 & 0 & 0 & 1
\end{bmatrix}
, \quad 
H =\begin{bmatrix}
0 & 0 & 1 & 0\\
0 & 0 & 0 & 1\\
1 & 0 & 0 & 0\\
0 & 1 & 0 & 0
\end{bmatrix},
\end{aligned}
\end{equation}
so $r=0$ recreates the previous example. Figure~\ref{fig:Holes_in_hypersurface} 
looks at slices of the Clifford spectrum for this example.

\section*{Supplemetary files}
The supplementary files are available for download from 
\begin{quote}
\url{math.unm.edu/~loring/CliffordExperiments/} 
\end{quote}
and are all Mathematica files, videos created Mathematica files or
a PDF copy of a Mathematica file.

\section*{Acknowledgments}

The research of all authors for this project was supported in part by the National Science Foundation (DMS \#1700102).

\bibliographystyle{amsplain}
\bibliography{CiffordRefs}


\end{document}